\def\R{{\mathbb R}}
\def\C{{\mathbb C}}
\def\Z{{\mathbb Z}}
\def\<{\langle}
\def\>{\rangle}
\def\E{{\mathbb E}}
\def\U{{\mathbb U}}
\def\V{{\mathbb V}}
\def\I{\mathbb I}
\def\X{{\mathbb X}}
\def\c{\underline c}
\def\v{\underline v}
\def\Y{{\mathbb Y}}
\def\Z{{\mathbb Z}}
\def\A{\mathcal{A}}
\def\B{\mathcal{B}}
\newtheorem{th-def}{Theorem-Definition}[section]
\newtheorem{theo}{Theorem}[section]
\newtheorem{lemm}[theo]{Lemma}
\newtheorem{Rem}[theo]{Remark}
\title{Convolution, subordination and characterization problems in noncommutative probability}
\author[W. Ejsmont, U. Franz, K. Szpojankowski]{W. Ejsmont, U. Franz, K. Szpojankowski}
\address[W. Ejsmont]{Department of Mathematical Structure Theory (Math C)
TU Graz
Steyrergasse 30, 8010 Graz, Austria and Department of Mathematics and Cybernetics
  Wroclaw University of Economics \\
ul. Komandorska 118/120, 53-345 Wroc\l{}aw, Poland}
\email{wiktor.ejsmont@gmial.com}
\subjclass[2010]{Primary: 46L54. Secondary: 62E10.}
\address[U. Franz]{D\'epartement de math\'ematiques de Besan\c{c}on,
Universit\'e de Franche-Comt\'e 16, route de Gray, 25 030
Besan\c{c}on cedex, France}
\email{uwe.franz@univ-fcomte.fr}
\thanks{UF was supported by an ANR Project OSQPI (ANR-11-BS01-0008) and by the Alfried Krupp Wissenschaftskolleg in Greifswald.}
\thanks{WE was  partially supported by  Austrian Science Fund (FWF)
Project No P 25510-N26 and by the  Polish National Science Center grant No. 2012/05/B/ST1/00626}
\thanks{KSz was partially supported by the Polish National Science Center grant No. 2012/05/B/ST1/00554.}
\address[K. Szpojankowski]{Wydzia\l{} Matematyki i Nauk Informacyjnych\\
Politechnika Warszawska\\
ul. Koszykowa 75\\
00-662 Warszawa, Poland}
\email{k.szpojanowski@mini.pw.edu.pl}
\keywords{Lukacs characterization,
conditional expectation, freeness, free-Poisson distribution, free-Binomial distribution,  free Meixner distribution}
\begin{document}
\begin{abstract} 
Characterization problems in free probability are studied here. Using subordination of free additive and free multiplicative convolutions we generalize some known characterizations in free probability to random variables with unbounded support. Using this technique we also prove a new characterization of distributions of free random variables. A similar technique is used to study Laha-Lukacs regressions for monotonically independent random variables.
\end{abstract}
\maketitle

\section{Introduction}

Regressive characterization problems in free probability have been studied in last the years by several authors. The first result in this direction was obtained by Bo\.zejko and Bryc in \cite{BoBr} where Laha-Lukacs regressions in free probability were studied. Ejsmont \cite{Ejs0} proved that measures obtained in \cite{BoBr} have the assumed regressions.
Roughly speaking, Laha-Lukacs regression in classical probability says that only the Meixner distributions have the property that the first conditional moment of $X$ given $X+Y$ is a linear function of $X+Y$, and the second conditional moment of the same type is a quadratic function of $X+Y$. Another characterization of similar type was studied in \cite{SzWes}, where free Poisson and free binomial distributions were characterized by properties of first two conditional moments. For other results in similar directions consult \cite{Ejs,Ejs1,Ejs2,Szp}.

The original motivation for this paper was to generalize and find a unified way to proof known regressive characterizations of distributions of free random variables. To this end we use Biane's results concerning subordination of free additive and free multiplicative convolutions (see \cite{Bian2}). This approach leads us to prove in a simpler way results from \cite{BoBr} and \cite{SzWes}. It allowed us also to relax the condition of compact support for the distribution to the existence of some low order moments. Using the developed technique we also prove a new characterization of free binomial distribution which is a free analogue of the classical characterization of the Beta distribution of the first kind from \cite{SeshWes}.

Additionally we study the Laha-Lukacs regression for monotone independent random variables, and we prove that monotone Meixner distributions can be determined in a similar way as we determine it in free case. It turns out that the free and monotone Meixner classes coincide.

We would like to note that we will work in two settings: for free probability results we assume that the probability space is tracial (i.e.  the state is tracial) and we call such probability space a tracial probability space, in the monotone case we do not assume traciality of the state and we refer to such probability space as non-commutative probability space.

The paper is organized as follows.  In section 2 we introduce briefly the necessary prerequisites on free and monotone probability theory. In section 4 we state the main results and then prove them.

\section{Prerequisites on free and monotone probability} 

\subsection{Free probability}

By a tracial probability space we understand here a pair $(\mathcal{A},\tau)$ where $\mathcal{A}$ is a von Neumann algebra, and  $\tau:\mathcal{A} \to \mathrm{C}$ is a normal, faithful, tracial state, i.e. $\tau(\cdot)$ is linear, continuous in weak* topology, $\tau(\X \Y)=\tau(\Y \X)$,  $\tau(\mathrm{I})=1$, $\tau(\X\X^*)\geq 0$ and $\tau(\X \X^{*}) = 0$ implies $\X = 0$ for all $\X,\Y \in \mathcal{A}$.
We denote by $\widetilde{\mathcal{A}}$ the algebra of unbounded operators affiliated to $\A$ (i.e. $\X \in \widetilde{\mathcal{A}}$ , if and only if all its spectral measures are in $\A$) and
by $\widetilde{\mathcal{A}}_{sa}$ the subspace of self-adjoint elements of $\widetilde{\mathcal{A}}$. 
A (noncommutative)  random variable $\X$ is a self-adjoint (i.e. $ \X=\X^*$) element of $\widetilde{\mathcal{A}}_{sa}$.
If $\X\in\widetilde{\mathcal{A}}_{sa}$, the distribution of $\X$ in the state $\tau$ is the unique probability measure $\mu_\X$ on $\mathbb{R}$ such that $\tau( f(\X))=\int_\mathbb{R}f(\lambda)d\mu_\X(\lambda)$
for any bounded Borel function $f$ on $\mathbb{R}$.
\\ \\
Let $I$ be a set of indices and $\mathcal{B}_i$, for $i \in I$ , be von Neumann subalgebras
of $\mathcal{A}$. Subalgebras $\left(\mathcal{B}_i\right)_{i\in I}$ are called free if $\tau(\X_{1} \dots \X_{n} ) = 0$ whenever $\tau(\X_{j} ) = 0$ for all
$j = 1,\dots, n$ and $\X_{j} \in \mathcal{B}_{i(j)}$ for some indices $i(1)\neq i(2)\neq \dots \neq i(n) .$
 Random variables $\X_{1},\dots ,\X_{n} $  are freely independent (free) if algebras generated by their spectral projections are free. 
\\
\\
If $\mathcal{B} \subset \mathcal{A}$ is a von Neumann subalgebra and $\mathcal{A}$ has a trace $\tau$, then there exists a unique conditional
expectation from $\mathcal{A}$ to $\mathcal{B}$ with respect to $\tau$, which we denote by $\tau(\cdot|\mathcal{B})$. This map
is a weakly continuous, completely positive, identity preserving, contraction and it is characterized by the property that, $\tau(\X\Y) = \tau(\tau(\X|\mathcal{B})\Y )$  for any $\mathbb{X} \in \mathcal{A}$ and for any $\mathbb{Y} \in \mathcal{B}$ (see \cite{Bian2,T}). For fixed $\X \in \mathcal{A}$ by $\tau(\cdot|\X)$ we denote the conditional expectation corresponding to the von Neumann algebra $\mathcal{B}$ generated by $\X$ and $\mathbb{I}$. 
\\ \\
\noindent Let $\mu$ and $\nu$ are be probability measures on $\mathbb{R}$, 
then there exists a tracial
probability space $(\A,\tau)$ and self-adjoint elements $\X,\Y$ in $\widetilde{\mathcal{A}}_{sa}$ with
respective distributions $\mu$ and $\nu$, such that $\X$ and $\Y$ are free. The distribution of $\X+\Y$ is called the free additive convolution of $\mu$ and $\nu$ and is denoted by $\mu \boxplus \nu$.
\\
\\
Assume that $\X$ and $\Y$ are self-adjoint and free, $\X$, $\Y$ with distributions $\mu$ and $\nu$, respectively. If $\X$ is positive, then the support of $\mu$ is a subset of $(0,\infty)$, and the free multiplicative convolution of $\mu$ and $\nu$ is defined as the distribution of $\sqrt{\X}\,\Y\sqrt{\X}$ and denoted by $\mu\boxtimes\nu$.  
For more details about free convolutions and free probability theory, the reader can consult \cite{NicaSpeicherLect,ViculecuDykemaNica}.

\subsection{Analytical Tools}
The Cauchy-Stieltjes transform of a probability measure $\mu$ is defined as
$$
G_{\mu}(z)=\int_{\R}\,\frac{\mu(dx)}{z-x},\qquad \Im(z)>0.
$$
Then define the mapping $L_\mu:\C^+\to\C^+$ by:
$$
L_\mu(z)=\frac{1}{G_{\mu}(z)}
$$
and note that $L_\mu$ is analytic on $\C^+$.
It was proved by Bercovici and Voiculescu in \cite{BercoVoiculecu} that there exist positive numbers $\eta$ and $M$, such that $L_\mu$ has an (analytic) right inverse $L_\mu^{-1}$ defined on the region
$$\Lambda_{\eta,M}=\{z\in\C:|\Re(z)|<\eta\Im(z), \Im(z)>M\}.$$
For a non-commutative random variable $\X$ with distribution $\mu$ its Voiculescu transform $\varphi_\X=\varphi_\mu$ is defined as
\begin{align}\label{rtr}
\varphi_\mu(z)=L^{-1}_\mu(z)-z=\mathcal{R}_\mu(1/z),
\end{align}
on any region of the form $\Lambda_{\eta,M}$, where $L^{-1}$  is defined.
 From \cite{BercoVoiculecu} it follows that for $\X$ and $\Y$ which are freely independent
\begin{align}\label{freeconv}
\varphi_{\X+\Y}(z)=\varphi_{\X}(z)+\varphi_{\Y}(z).
\end{align}
This relation explicitly (in the sense of $\varphi$-transform) defines free convolution of $\X$ and $\Y$.
If $\X$ has the distribution $\mu$, then often we will write $\varphi_{\mu}$ instead $\varphi_{\X}$. Note that by Bercovici and Voiculescu's result in \cite{BercoVoiculecu}, any two of the three distributions of $\X$, $\Y$, and $\X+\Y$ determine uniquely the third.
\\
\\
\noindent
Another analytical tool is an $S$-transform which works nicely with products of freely independent variables. Let  $\nu$ denote the  probability measure supported on $\R^+$. Observe
that, with exception of $\delta_0$, all such measures have nonzero first moment (if the 1st moment exists) and
we assume throughout that we are not dealing with this measure. Consider
the following function:
$$\psi_{\mu}(z)=\int_{\mathbb{R}^+}\frac{z\xi}{1-z\xi}d\mu(\xi),$$
for $z\in\C \backslash\R^+$. As it was proved in \cite{BercoVoiculecu} $\psi_{\mu}|_{i\C^+}$ is univalent and maps into
an open neighborhood about the interval $(\mu(\{0\})-1,0)$. Let $\Omega_\mu=\psi_{\mu}|_{i\C^+}$ and let $\chi: \Omega_\mu=\psi_{\mu}|_{i\C^+}\to i\C^+ $ denote the inverse function.
 For a non-commutative random variable $\X$ its $S$-transform, denoted by $S_{\X}$, is defined by the equation
\begin{align}\label{Str}
S_{\X}(z)=\frac{(1+z)\chi_\mu(z)}{z},
\end{align}
 For $\X$ and $\Y$ which are freely independent
\begin{align}\label{Scon}
S_{\X\,\Y}=S_{\X}\,S_{\Y}. 
\end{align}
\\
\\
\noindent\textbf{Free Meixner distribution}

\noindent A non-commutative random variable $\X$ is said to be a free Meixner variable if the Cauchy-Stieltjes transform is given by the formula

\begin{eqnarray}
G_{\mu_{a,b}}(z)=\frac{(1 + 2b)z + a -\sqrt{(z - a)^2 - 4(1 + b)}}{2(bz^2 + az + 1)} , \label{eq:GtransformataMixner}
\end{eqnarray}
\\
where $a \in \mathbb{R},b\geq -1$ and $|z|$ is big enough, where the branch of the analytic square root should be determined by the condition
that $\Im(z)>0\Rightarrow \Im(G_\mu(z))\leqslant 0$ (see \cite{SY}). 
Equation (\ref{eq:GtransformataMixner}) describes a family of distributions  with mean zero and variance one (see \cite{Ejs1,SY}). 
The absolutely continuous part of $\mu_{a,b}$ is 
$$\frac{\sqrt{4(1+b)-(x-a)^2}}{2\pi(bx^2+ax+1)},$$
on $a -\sqrt{4(1+b)} \leq x \leq a +\sqrt{4(1+b)} $. The measure $\mu_{a,b}$ may also have one atom if $a^2>4b  \geq 0$ and a second atom if $-1 \leq b  < 0$.

Free Meixner distributions have Voiculescu transform
\begin{equation}\label{meixner-phi}
\varphi_{a,b}(z)=\frac{1}{2b}\left(z-a+\sqrt{(z+a)^2 - 4b}\right)
\end{equation}
and R-tranform
\[
\mathcal{R}_{a,b}(z)=\frac{1}{2bz}\left(1-az + \sqrt{(1+az)^2 - 4bz^2}\right).
\]
\\
\\
\\
\\

The free Poisson and free Binomial distributions  belong to the free Meixner class. Since this distributions will appear in several places in this paper, we need to state some additional facts about them.

\textbf{Free Poisson distribution}

\noindent 

A non-commutative random variable $\X$ is said to be free-Poisson variable if it has Marchenko-Pastur (or free-Poisson) distribution $\nu=\nu(\lambda,\alpha)$ defined by the formula
\begin{align} \label{MPdist}
\nu=\max\{0,\,1-\lambda\}\,\delta_0+\lambda \tilde{\nu},
\end{align}
where $\lambda\ge 0$ and the measure $\tilde{\nu}$, supported on the interval $(\alpha(1-\sqrt{\lambda})^2,\,\alpha(1+\sqrt{\lambda})^2)$, $\alpha>0$ has the density (with respect to the Lebesgue measure)
$$
\tilde{\nu}(dx)=\frac{1}{2\pi\alpha x}\,\sqrt{4\lambda\alpha^2-(x-\alpha(1+\lambda))^2}\,dx.
$$
The parameters $\lambda$ and $\alpha$ are called the rate and the jump size, respectively.
It is worth to note that a non-commutative variable with Marchenko-Pastur distribution arises also as a limit in law (in non-commutative sense) of variables with distributions $((1-\frac{\lambda}{N})\delta_0+\frac{\lambda}{N}\delta_{\alpha})^{\boxplus N}$ as $N\to\infty$, see \cite{NicaSpeicherLect}. Therefore, such variables are often called free-Poisson.
If $\X$ is free-Poisson with distribution $\nu(\lambda,\alpha)$ then its $S$-transform has the form (see  \cite{SzWes})
\begin{align}  S_{\X}(z)=\frac{1}{\alpha\lambda+\alpha z}. \label{eq:StransformataPoison}\end{align} 
\\
\\
\\
\textbf{Free binomial distribution}

\noindent A non-commutative random variable $\X$ is free-binomial if its distribution $\beta=\beta(\sigma,\theta)$ is defined by
\begin{align}\label{freebeta}
\beta=(1-\sigma)\mathbb{I}_{0<\sigma<1}\,\delta_0+\tilde{\beta}+(1-\theta)\mathbb{I}_{0<\theta<1}\delta_1,
\end{align}
where $\tilde{\beta}$ is supported on the interval $(x_-,\,x_+)$,
\begin{align}
\label{binom_supp}
x_{\pm}=\left(\sqrt{\frac{\sigma}{\sigma+\theta}\,\left(1-\frac{1}{\sigma+\theta}\right)}\,\pm\,\sqrt{\frac{1}{\sigma+\theta}\left(1-\frac{\sigma}{\sigma+\theta}\right)}\right)^2,
\end{align}
and has the density
$$
\tilde{\beta}(dx)=(\sigma+\theta)\,\frac{\sqrt{(x-x_-)\,(x_+-x)}}{2\pi x(1-x)}\,dx,
$$
where
$(\sigma,\theta)\in \left\{(\sigma,\theta):\,\frac{\sigma+\theta}{\sigma+\theta-1}>0,\,\frac{\sigma\theta}{\sigma+\theta-1}>0\right\}$.
The n-th free convolution power of distribution
$$
p\delta_0+(1-p)\delta_{1/n}
$$
is free-binomial distribution with parameters $\sigma=n(1-p)$ and $\theta=np$, which justifies the name of the distribution (see \cite{SY}). Its $S$-transform is of the form (see  \cite{SzWes})
\begin{align} S_{\U}(z)=1+\frac{1}{\sigma/\theta+ z/\theta}. \label{eq:StransormataBainomial}
\end{align}

\subsection{Complementary facts}

The proofs of the main theorems are based on Biane's \cite{Bian2} description of the conditional expectation of functions of $\X +\Y$ onto the algebra
generated by $\X$.   Here, we introduce the theorems of Biane in a special case i.e. when our function is resolvent type map. The main idea of these results is that for free additive and multiplicative convolutions there exist a subordination functions.
\begin{theo}
Let $(\mathcal{A},\tau)$ be a tracial probability space, $\mathcal{B}$  be a von
Neumann subalgebra of  $\mathcal{A}$, let  $\Y\in \widetilde{\A}_{sa}$, be a self-adjoint element which is free with $\mathcal{B}$, and
let $\X=\X^*\in \widetilde{\mathcal{B}}_{sa}$. Denote by $\mu$ and $\nu$ the distributions of $\X$ and $\Y$ respectively. Then there exists  an analytic function $F$ on $\mathbb{C}\setminus\mathbb{R}$
such that
\begin{align}  
\tau\big(\mathcal{R}_{\X+\Y}(z)|\mathcal{B})=\mathcal{R}_{\X}(F(z)). 
\end{align}
where $\mathcal{R}_{\X}(z)=(z-\X)^{-1}$ (i.e. resolvent map). Function $F$ satisfies $F(\overline{\xi})=\overline{F(\xi)}$, $F(\mathbb{C}^+)\subset\mathbb{C}^+$, $Im(F(\xi))\geq Im(\xi)$, $\lim_{y\to\infty} \frac{F(iy)}{iy}=1$ and we have
$$G_{\mu\boxplus\nu}(\xi)=G_\mu(F(\xi)),$$
for all $\xi \in \mathbb{C}^+$.
\label{twr:1}
\end{theo}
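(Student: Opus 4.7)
The plan is to apply the conditional expectation $E_\mathcal{B}:=\tau(\cdot|\mathcal{B})$ to the resolvent $\mathcal{R}_{\X+\Y}(z)=(z-\X-\Y)^{-1}$ and show that the resulting element of $\widetilde{\mathcal{B}}$ has the form $(F(z)-\X)^{-1}$ for a scalar $F(z)\in\mathbb{C}^+$. Granting this structural identity, $E_\mathcal{B}(\mathcal{R}_{\X+\Y}(z))=\mathcal{R}_\X(F(z))$ is immediate, and applying the trace $\tau$ produces the scalar subordination $G_{\mu\boxplus\nu}(z)=\tau((F(z)-\X)^{-1})=G_\mu(F(z))$.

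To establish the structural identity, I would first reduce to the bounded case by a spectral truncation of $\X$ and $\Y$, so that for $|z|$ larger than $\|\X\|+\|\Y\|$ the Neumann expansion
\[
(z-\X-\Y)^{-1}=\sum_{n\ge 0}(z-\X)^{-1}\bigl[\Y(z-\X)^{-1}\bigr]^n
\]
converges in norm and $E_\mathcal{B}$ may be applied term by term. Because $\Y$ is free from $\mathcal{B}$, every conditional expectation of a monomial in $(z-\X)^{-1}$ and $\Y$ is governed by the moment--cumulant formula with all mixed free cumulants having one argument in $\mathcal{B}$ and one in the algebra generated by $\Y$ vanishing; the outcome therefore depends on $\Y$ only through its scalar free cumulants and lies in the functional calculus of $\X$. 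Resumming the series yields an identity of the shape $E_\mathcal{B}(\mathcal{R}_{\X+\Y}(z))=(F(z)-\X)^{-1}$ with $F(z)$ a scalar analytic function of $z$, which is exactly the specialization of Voiculescu's operator-valued subordination to the scalar-free situation.

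The required properties of $F$ then follow almost formally. Analyticity in $z$ comes from analyticity of the resolvent, the symmetry $F(\bar z)=\overline{F(z)}$ from self-adjointness of $\X,\Y$, and the Nevanlinna-type properties $F(\mathbb{C}^+)\subset\mathbb{C}^+$, $\Im F(z)\ge\Im z$, and $F(iy)/iy\to 1$ from standard estimates applied to the reciprocal Cauchy transform via $L_{\mu\boxplus\nu}(z)=L_\mu(F(z))$, the reciprocal of the scalar subordination identity. The unbounded case is recovered by approximating $\X,\Y$ by their spectral truncations and invoking stability of both Voiculescu's transform and the subordination function under this limit. The genuine obstacle is the combinatorial/operator-valued step asserting that $E_\mathcal{B}(\mathcal{R}_{\X+\Y}(z))^{-1}+\X$ is a scalar multiple of the identity: this is the real content of Biane's theorem, and everything else is a formal consequence of it together with routine complex-analytic bookkeeping.
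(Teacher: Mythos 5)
This theorem is quoted in the paper from Biane \cite{Bian2} without proof, so there is no in-paper argument to compare against; the relevant benchmark is Biane's original proof (or the later analytic proofs of subordination). Measured against that, your proposal has a genuine gap at exactly the point where the theorem lives. The moment--cumulant bookkeeping you invoke does show that $\tau\bigl(\mathcal{R}_{\X+\Y}(z)\big|\mathcal{B}\bigr)$ lands in the von Neumann algebra generated by $\X$ and depends on $\Y$ only through its scalar cumulants; but that only tells you the conditional expectation is \emph{some} function of $\X$, not that it is again a resolvent $(F(z)-\X)^{-1}$ at a single scalar point. Term by term, $\tau\bigl[(z-\X)^{-1}(\Y(z-\X)^{-1})^{n}\big|\mathcal{B}\bigr]$ is a sum over non-crossing partitions whose blocks produce products of powers of $(z-\X)^{-1}$ interleaved with moments of $\X$, and the assertion that the resummed series collapses to a single shifted resolvent is the entire content of the theorem. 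You concede this in your final sentence, which turns the proposal into a restatement of the problem rather than a proof of it. To close the gap one must either carry out Biane's explicit combinatorial resummation, or argue analytically: define $F=L_{\mu}^{-1}\circ L_{\mu\boxplus\nu}$ on a domain $\Lambda_{\eta,M}$ using the Bercovici--Voiculescu inverse, verify the operator identity there by matching moment expansions of both sides, and then extend $F$ to all of $\mathbb{C}^{+}$ by a Nevanlinna-type continuation argument.

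A secondary, more minor issue is the passage to unbounded operators. Spectral truncation of $\X$ and $\Y$ does preserve freeness, but you still need to justify that the truncated subordination functions converge (a normal-families argument together with the normalization $F(iy)/iy\to 1$ does this) and that the limit satisfies the operator identity, which requires strong resolvent convergence of the truncated sums to $\X+\Y$; for self-adjoint operators affiliated with a finite von Neumann algebra this holds but is not automatic and deserves an explicit argument. The derivation of the properties $F(\overline{\xi})=\overline{F(\xi)}$, $F(\mathbb{C}^{+})\subset\mathbb{C}^{+}$, $\Im F(\xi)\geq\Im\xi$ from $L_{\mu\boxplus\nu}=L_{\mu}\circ F$ is fine once the existence of $F$ on all of $\mathbb{C}^{+}$ has been established.
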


\begin{Rem}\label{rem-sub}
If $\varphi_\mu$ and $\varphi_\nu$ are proportional, i.e., if $\nu$ is a free
convolution power of $\mu$, then we can give an explicit expression for the
subordination function. Suppose we have $\varphi_\nu(z) = \lambda
\varphi_\mu(z)$ with some $\lambda>0$, i.e., $\nu=\mu^{\boxplus \lambda}$,
then we have $\varphi_{\mu\boxplus\nu}=(1+\lambda)\varphi_\mu$ and therefore
\[
L^{-1}_\mu(z) = \frac{1}{1+\lambda} L^{-1}_{\mu\boxplus\nu}(z) + \frac{\lambda
  z}{1+\lambda}.
\]
The subordination function is then given by
\[
F(z) = L^{-1}_\mu\circ L_{\mu\boxplus\nu}(z) = \frac{\lambda}{1+\lambda}
L_{\mu+\nu}(z) + \frac{z}{1+\lambda}
\]
i.e., it is the reciprocal Cauchy-Stieltjes transform of a boolean convolution
power or $\mu\boxplus\nu$. More precisely, we have $F=L_\rho$, with
$\rho=(\mu\boxplus\nu)^{\uplus \frac{\lambda}{1+\lambda}}$, where $\uplus$
denotes the additive boolean convolution, cf.\ \cite{SpWo}.
\end{Rem}

\noindent We also need the following theorem in the case of multiplicative free convolution.  

\begin{theo}
Let $(\mathcal{A},\tau)$ be a tracial probability space, $\mathcal{B}$  be a von
Neumann subalgebra of  $\mathcal{A}$, and  $\X,\Y\in \widetilde{\A}_{sa}$ such that $\X$ and $\Y$ are positive, with
respective distributions $\mu$  and $\nu$, different from $\delta_0$, one has $\X\in \widetilde{\mathcal{B}}_{sa}$ and $\Y$ is free with $\B$. Then there exists  an analytic function $F$ on $\mathbb{C}\setminus\mathbb{R}_+$ 
such that

\begin{align}  
\tau\big(\xi\X^{1/2}\Y\X^{1/2}(\I-\xi\X^{1/2}\Y\X^{1/2})^{-1}|\mathcal{B})=F(\xi)\X(\I-F(\xi)\X)^{-1}, \label{eq:rownanieDualne}
\end{align}
Function $F$ satisfies $F(\overline{\xi})=\overline{F(\xi)}$, $F(\xi)\in\mathbb{C}^+$, $Arg(F(\xi))\geq Arg(\xi)$ and 
$$\psi_{\mu\boxtimes\nu}(\xi)=\psi_\mu(F(\xi)),$$
for all $\xi \in \mathbb{C}^+$.  
\label{twr:2}
\end{theo}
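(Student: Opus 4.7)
The strategy parallels that of Theorem~\ref{twr:1}, with $\psi$-transforms and $S$-transforms replacing Cauchy-Stieltjes and $\mathcal{R}$-transforms. The plan is to first build the candidate subordination function $F$ from the scalar identity $\psi_{\mu\boxtimes\nu}(\xi)=\psi_\mu(F(\xi))$, then check its analytic properties, and finally upgrade the scalar identity to the operator identity via moment comparison.

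For the construction of $F$, on the open set $\psi_{\mu\boxtimes\nu}^{-1}(\Omega_\mu)$ I set $F(\xi):=\chi_\mu\bigl(\psi_{\mu\boxtimes\nu}(\xi)\bigr)$, which by construction satisfies the subordination relation. Combining the $S$-transform multiplicativity $S_{\mu\boxtimes\nu}=S_\mu S_\nu$ with \eqref{Str} yields a closed-form expression for $F$ in terms of $S_\nu$ and $\psi_{\mu\boxtimes\nu}$, which continues analytically to $\C\setminus\R_+$. The reflection symmetry $F(\overline{\xi})=\overline{F(\xi)}$ and the mapping property $F(\C^+)\subset\C^+$ are inherited from the corresponding properties of $\psi_\mu$, $\chi_\mu$ and $\psi_{\mu\boxtimes\nu}$. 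The argument inequality $\mathrm{Arg}(F(\xi))\geq\mathrm{Arg}(\xi)$ amounts to saying that $F(\xi)/\xi$ lies in the closed right half-plane; this is a Denjoy--Wolff type consequence of the fact that $\mu\boxtimes\nu$ is obtained from $\mu$ by multiplicative convolution with a probability measure on $\R_+$.

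For the operator identity, set
\[
\Phi(\xi):=\tau\bigl(\xi\X^{1/2}\Y\X^{1/2}(\I-\xi\X^{1/2}\Y\X^{1/2})^{-1}\bigm|\B\bigr).
\]
A power series expansion for small $\xi$, combined with the freeness of $\Y$ from $\B$ and the definition of the conditional expectation, forces each Taylor coefficient of $\Phi(\xi)$ to lie in the commutative von Neumann subalgebra generated by $\X$ and $\I$. Hence $\Phi(\xi)$ is determined by the scalar moments $\tau(\X^k\Phi(\xi))$ for $k\geq 0$, which by traciality equal $\sum_{n\geq 1}\xi^n\,\tau\bigl((\X\Y)^n\X^k\bigr)$. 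The key step is to show that this series equals $\sum_{n\geq 1}F(\xi)^n\tau(\X^{n+k})=\tau\bigl(\X^k F(\xi)\X(\I-F(\xi)\X)^{-1}\bigr)$ for every $k\geq 0$; the case $k=0$ is precisely the scalar relation built into the definition of $F$.

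The case $k\geq 1$ is the main obstacle. The cleanest route, in the spirit of Biane, is to pass to operator-valued free probability: freeness of $\Y$ from $\B$ implies in particular freeness with amalgamation over the commutative subalgebra generated by $\X$ and $\I$, and the corresponding $\B$-valued $\psi$-transform of $\X^{1/2}\Y\X^{1/2}$ factorizes so as to depend on $\Y$ only through $\psi_\nu$ evaluated at a $\B$-valued argument; the scalar computation already carried out identifies this argument, after taking expectations, as $F(\xi)\X$. Once the operator identity holds in a neighbourhood of $\xi=0$, analytic continuation in $\xi$ extends it to all of $\C\setminus\R_+$, completing the proof.
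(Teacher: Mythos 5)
First, a point of comparison: the paper does not prove Theorem~\ref{twr:2} at all. It is imported verbatim (specialized to resolvent-type functions) from Biane \cite{Bian2}, alongside Theorem~\ref{twr:1}, so there is no internal proof to measure your attempt against; the question is only whether your sketch would stand on its own. It would not, for two reasons.

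The first gap is that your entire verification of the operator identity runs through power-series expansions in $\xi$ and the moments $\tau\bigl((\X\Y)^n\X^k\bigr)$. But the theorem is stated for positive $\X,\Y\in\widetilde{\A}_{sa}$, i.e.\ possibly unbounded affiliated operators whose distributions need not have finite moments of any order; indeed the stated purpose of invoking subordination in this paper is to drop compact-support and all-moments hypotheses. For such operators the series $\sum_{n\ge1}\xi^n\tau\bigl((\X\Y)^n\X^k\bigr)$ need not exist, so there is no ``neighbourhood of $\xi=0$'' in which your identity is established, and hence nothing to continue analytically. Biane's argument is complex-analytic and operator-theoretic precisely so as to avoid moments.

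The second gap is the step you yourself label ``the main obstacle,'' the case $k\ge1$. What you offer there is not an argument but a restatement of the conclusion in operator-valued language: the assertion that the $\B$-valued $\psi$-transform of $\X^{1/2}\Y\X^{1/2}$ ``factorizes so as to depend on $\Y$ only through $\psi_\nu$ evaluated at a $\B$-valued argument'' is itself an operator-valued multiplicative subordination theorem, at least as strong as the statement to be proved, and it does not follow from the scalar computation. (The auxiliary claim that scalar freeness of $\Y$ from $\B$ yields freeness with amalgamation over the subalgebra generated by $\X$ and $\I$ is true but is a theorem requiring proof or citation; and even granted it, the claimed factorization is not derived.) Note also that the conditional expectation in the statement is onto all of $\B$, not onto the algebra generated by $\X$; that it nevertheless lands in the latter is part of what must be shown, and your ``each Taylor coefficient lies in the algebra generated by $\X$'' again presupposes the moment calculus you cannot use here. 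The construction of $F$ as $\chi_\mu\circ\psi_{\mu\boxtimes\nu}$ and the list of mapping properties are the right ingredients, but as written the proposal does not prove the theorem; the appropriate course, and the one the paper takes, is to cite \cite{Bian2}.
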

\noindent  We will also need the following fact.
\begin{lemm}
 For $z\in \C \backslash\R^+$ we define the operator $\Psi_{\X}(z)=z\X(\I-z\X)^{-1}$. This operator satisfies the following properties
\begin{align}
\label{eq:psi_prod}
\X^{n}\Psi_\X(z)=\frac{1}{z^n}\left(\Psi_\X(z)-\sum_{i=1}^n z^i\X^i\right),
\end{align}
for all integers $n\geq 0$. \label{lem:1}
\end{lemm}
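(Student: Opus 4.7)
The plan is to prove the identity by a short induction on $n$, reducing the general case to the elementary algebraic identity
$$z\X \Psi_\X(z) = \Psi_\X(z) - z\X,$$
which is simply the $n=1$ case of the formula rewritten. This seed identity follows directly from the definition: multiplying $\Psi_\X(z) = z\X(\I - z\X)^{-1}$ on the right by $(\I-z\X)$ yields $\Psi_\X(z)(\I-z\X) = z\X$, and since $\X$ commutes with $(\I-z\X)^{-1}$ by functional calculus, we may rearrange to $\Psi_\X(z) - z\X\Psi_\X(z) = z\X$. Dividing by $z$ then gives $\X\Psi_\X(z) = \frac{1}{z}\bigl(\Psi_\X(z) - z\X\bigr)$.

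For $n=0$ the claim is trivial, since the sum is empty and both sides equal $\Psi_\X(z)$. For the inductive step I would left-multiply the formula for $n$ by $\X$ and substitute the seed identity for $\X\Psi_\X(z)$. A brief reindexing of the sum $\sum_{i=1}^n z^i\X^{i+1}$ shows the right-hand side becomes $\frac{1}{z^{n+1}}\bigl(\Psi_\X(z) - \sum_{i=1}^{n+1}z^i\X^i\bigr)$, which is the desired statement at $n+1$.

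Alternatively, one can bypass induction by verifying the identity in one stroke: multiply both sides on the right by $(\I-z\X)$. The left-hand side collapses immediately to $z\X^{n+1}$, and on the right the sum telescopes, since $\sum_{i=1}^n z^i\X^i(\I-z\X) = \sum_{i=1}^n z^i\X^i - \sum_{i=1}^n z^{i+1}\X^{i+1} = z\X - z^{n+1}\X^{n+1}$, leaving $\frac{1}{z^n}\bigl(z\X - z\X + z^{n+1}\X^{n+1}\bigr) = z\X^{n+1}$. Invertibility of $(\I-z\X)$ for $z\in\C\setminus\R^+$, which holds because the spectrum of the positive self-adjoint $\X$ lies in $\R_+$ so $1/z$ is never in $\sigma(\X)$, then concludes the argument. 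There is no real obstacle: the whole content of the lemma is the remark that in the formal expansion $\Psi_\X(z) = \sum_{i\geq 1} z^i\X^i$, multiplying by $\X^n$ just shifts indices, and the two proofs above make this rigorous within the operator algebra.
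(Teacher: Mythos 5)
Your inductive argument is correct and is essentially the paper's own proof: the same seed identity $\X\Psi_\X(z)=\tfrac{1}{z}\left(\Psi_\X(z)-z\X\right)$ followed by the same left-multiplication and reindexing step. Your alternative one-stroke verification (right-multiplying both sides by the invertible $\I-z\X$ and letting the sum telescope) is also valid and somewhat cleaner, though the paper does not take that route.
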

\begin{proof}
We will prove it inductively, let us check it for $k=1$.
\begin{align*}
\frac{1}{z}\left(z\X\left(\I-z\X\right)^{-1}-z\X\right)=\X\left(\left(\I-z\X\right)^{-1}-\I\right)=
\X\left(\I-z\X\right)^{-1}\left(\I-\left(\I-z\X\right)\right)=z\X^{2}\left(\I-z\X\right)^{-1}=\X\Psi_\X(z).
\end{align*}
Assume now that \eqref{eq:psi_prod} holds for $n=k-1$ we will prove it for $n=k$.
\begin{align*}
\X^{k}\Psi_\X(z)&=\X\X^{k-1}\Psi_\X(z)=\X\frac{1}{z^{k-1}}\left(\Psi_\X(z)-\sum_{i=1}^{k-1} z^i\X^i\right)=\frac{1}{z^{k-1}}\left(\X\Psi_\X(z)-\sum_{i=1}^{k-1} z^i\X^{i+1}\right)\\
&=\frac{1}{z^{k-1}}\left(\frac{1}{z}\left(\Psi_\X(z)-z\X\right)-\sum_{i=1}^{k-1} z^i\X^{i+1}\right)=\frac{1}{z^k}\left(\Psi_\X(z)-\sum_{i=1}^k z^i\X^i\right),
\end{align*}
which proves \eqref{eq:psi_prod}.\end{proof}

\subsection{Monotone independence}

For working with monotone independence we have to modify our setting. By a
non-commutative probability space 
 we understand here a pair $(\mathcal{A},\phi)$ where $\mathcal{A}$ is a von Neumann algebra, and  $\phi:\mathcal{A} \to \mathbf{C}$ is a normal faithful state,

Let $I$ be a \emph{linearly ordered} set of indices and $\mathcal{B}_i$, for
$i \in I$, be strongly closed *-subalgebras of $\mathcal{A}$.  Subalgebras
$\left(\mathcal{B}_i\right)_{i\in I}$ are called \emph{monotonically
  independent} if
\begin{itemize}
\item[(i)]
$ \X_1\Y\X_2 = \phi(\Y)\X_1 \X_2$ whenever $\X_1,\in B_{i_1}$ $\X_2\in B_{i_2}$,
$\Y\in B_j$ with $j>i_1$ and $j>i_2$;
\item[(ii)]
$\phi(\X_1\cdots \X_k \Y \Z_1 \cdots \Z_{k'}) = \phi(\X_1)\cdots \phi(\X_m)
\phi(\Y) \phi(\Z_1) \cdots \phi(\Z_{m'})$ whenever $\X_\ell\in B_{i_\ell}$, $\Y\in
B_j$, $\Z_\ell\in B_{k_\ell}$ with $i_1 > \cdots > i_m > j$ and $j < k_1 < \cdots < k_{m'}$,
\end{itemize}
cf.\ \cite{mu1,mu2}.

\begin{Rem}
To get interesting examples we have to allow non-tracial states and non-unital
subalgebras. If $\mathcal{B}_1,\mathcal{B}_2\subseteq \mathcal{A}$ are two
unital subalgebras that are monotonically independent w.r.t.\ a state $\phi$,
then we get
\[
\phi(\X_1\X_2) = \phi(\X_1\mathbf{1} \X_2)=\phi(\X_1)\phi(\mathbf{1})\phi(\X_2) 
\]
for all $\X_1,\X_2\in \mathcal{B}_1$, by viewing $\mathbf{1}$ as an element of
$\mathcal{B}_2$. Similarly, if $\phi$ is a trace, then we have
\[
\phi(\X_1)\phi(\Y )\phi(\X_2) = \phi(\X_1\Y \X_2)=
\phi(\X_2\X_1\Y)=\phi(\X_2\X_1)\phi(\Y)
\]
for all $\X_1,\X_2\in \mathcal{B}_1$, $\Y\in \mathcal{B}_2$.

In both cases it follows that the states on all but the last algebra have to
be characters.
\end{Rem}

The non-unitality of the subalgebras also means that we can not have
(unit-preserving) conditional expectations. We call a map
$\E:\mathcal{A}\to\mathcal{B}$ from a non-commutative probability space
$(\mathcal{A},\phi)$ onto a strongly closed *-subalgebra a \emph{generalized
  conditional expectation}, if it satisfies the following conditions:
\begin{itemize}
\item[(i)]
$\E$ preserves $\phi$, i.e.\ $\phi\circ \E=\phi$;
\item[(ii)]
$\E$ is idempotent, i.e.\ $\E\circ\E=\E$;
\item[(iii)]
$\E$ is completely positive;
\item[(iv)]
$\E$ is a contraction, i.e.\ $||\E||\le 1$;
\item[(v)]
$\E$ satisfies the module property $\E(\X\Y\Z) = \X\E(\Y)\Z$, for
$\X,\Y\in\mathcal{B}$, $\Z\in \mathcal{A}$.
\end{itemize}

A typical example is given by the following. Let $H$ be a Hilbert space with
unit vector $\omega\in H$ and $P\in \mathcal{B}(H)$ an orthogonal projection
that leaves $\omega$ invariant. Set $\mathcal{A}=\mathcal{B}(H)$,
$\phi(\X)=\langle \omega, \X\omega\rangle$ and $\mathcal{B}=\{P\X P; \X\in
\mathcal{B}\}$. Then $\E(\X)=PXP$ for $\X \in \mathcal{A}$ defines a generalized
conditional expectation onto $\mathcal{B}$.

\begin{theo}\label{thm-con-exp-mon}
Let $(\mathcal{A},\phi)$ be a non-commutative probability space,
$\mathcal{B}_1,\mathcal{B}_2$ two monotonically independent strongly closed
*-subalgebras of  $\mathcal{A}$. Then there exists a generalized conditional expectation
\[
\E_\phi(\cdot|\mathcal{B}_1) : \mathcal{B}_1\vee\mathcal{B}_2\to  \mathcal{B}_1
\]
where $\mathcal{B}_1\vee\mathcal{B}_2$ denotes the smallest strongly closed
*-subalgebra of  $\mathcal{A}$ containing $\mathcal{B}_1$ and $\mathcal{B}_2$.
\end{theo}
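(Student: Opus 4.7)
The strategy is a three-step construction: first identify the algebraic structure of $\mathcal{B}_1\vee\mathcal{B}_2$ via axiom (i), next define $\E$ explicitly on a strongly dense $*$-subalgebra, and finally realize $\E$ as a compression by an orthogonal projection in the GNS representation so that complete positivity, contractivity, and normality come essentially for free (compare the motivating example just before the theorem statement).

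Exploiting axiom (i) with the ordering $1<2$, one has $b_1 c b_2=\phi(c)b_1 b_2$ for every $b_1,b_2\in\mathcal{B}_1$ and $c\in\mathcal{B}_2$. By iteratively collapsing interior sandwiches, every alternating product drawn from $\mathcal{B}_1\cup\mathcal{B}_2$ reduces to one of the forms $b$, $c$, $bc$, $cb$, or $cbc'$ with $b\in\mathcal{B}_1$ and $c,c'\in\mathcal{B}_2$. Consequently the linear span $\mathcal{M}_0$ of such simple products is a $*$-subalgebra which is strongly dense in $\mathcal{B}_1\vee\mathcal{B}_2$. On $\mathcal{M}_0$ I would define
\[
\E_0(cbc'):=\phi(c)\phi(c')\,b
\]
(with the obvious specializations when a factor is absent, so in particular $\E_0|_{\mathcal{B}_1}=\mathrm{id}$ and $\E_0(c)=\phi(c)\,\mathbf{1}_{\mathcal{B}_1}$), and extend by linearity. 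Well-definedness reduces to compatibility with the single collapsing relation from (i). State preservation $\phi\circ\E_0=\phi$ then follows from axiom (ii), idempotence is immediate since $\E_0$ fixes $\mathcal{B}_1$, and the module property $\E_0(b_1 X b_2)=b_1\E_0(X)b_2$ for $b_1,b_2\in\mathcal{B}_1$ is verified by a short case analysis.

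For complete positivity, contractivity, and normality I would take the GNS triple $(H,\pi,\Omega)$ of $(\mathcal{B}_1\vee\mathcal{B}_2,\phi)$, set $K:=\overline{\pi(\mathcal{B}_1)\Omega+\mathbf{C}\Omega}$, and let $P$ denote the orthogonal projection onto $K$. The central step is the compression identity
\[
P\pi(X)P=\pi(\E_0(X))P\qquad\text{for every }X\in\mathcal{M}_0.
\]
Once this is established, $\E_0$ inherits complete positivity, contractivity, and weak continuity from the compression map $T\mapsto PTP$, and normality of $\phi$ lets me extend $\E_0$ from $\mathcal{M}_0$ to the full strong closure $\mathcal{B}_1\vee\mathcal{B}_2$, yielding the claimed generalized conditional expectation onto $\mathcal{B}_1$.

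The main obstacle is the compression identity. Testing it against vectors $\pi(b)\Omega$ and $\Omega$ for $b\in\mathcal{B}_1$ reduces it to the family of scalar identities $\phi(b_1^* X b_2)=\phi(b_1^*\,\E_0(X)\,b_2)$ for all $X\in\mathcal{M}_0$ and $b_1,b_2\in\mathcal{B}_1$. These cannot be obtained from axiom (ii) directly, because the $\mathcal{B}_1$-letters surrounding the word share the lowest index and so violate the strict-inequality hypothesis of (ii); one first has to collapse the interior sandwiches using axiom (i) and only then factor the remaining alternating product with (ii). This combined use of (i) and (ii) is where the structural content of monotone independence enters, and the density/normality extension at the end is then comparatively routine.
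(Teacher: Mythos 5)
Your proposal is correct and follows essentially the same route as the paper: both realize $\E_\phi(\cdot|\mathcal{B}_1)$ as compression by the orthogonal projection onto the closure of $\pi(\mathcal{B}_1)\Omega$ (together with $\C\Omega$) in the GNS representation of $(\mathcal{B}_1\vee\mathcal{B}_2,\phi)$, with complete positivity, contractivity and normality inherited from the compression. The only cosmetic difference is that the paper packages the word-reduction combinatorics into the unitary $U:H\to H_1\otimes H_2$ borrowed from \cite{fr}, under which $\mathcal{B}_1$ acts as $\pi(\X)\otimes P_\xi$ and the projection is visibly affiliated with $\mathcal{B}_1$, whereas you verify the compression identity directly on the dense span of reduced words using axioms (i) and (ii).
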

\begin{proof}
Denote by $(H,\pi,\xi)$ the GNS representation of
$(\mathcal{B}_1\vee\mathcal{B}_2,\phi|_{\mathcal{B}_1\vee\mathcal{B}_2})$, and
set $H_1=\overline{\pi(\mathcal{B}_1)\xi}$,
  $H_2=\overline{\pi(\mathcal{B}_2)\xi}$. Since $\mathcal{B}_1$ and
    $\mathcal{B}_2$ are monotonically independent, one can show that
\[
U:\pi(\X_2)\pi(\X_1)\xi \mapsto \pi(\X_1)\xi \otimes \pi(\X_2)\xi
\]
extends to a unitary map from $H$ to $H_1\otimes H_2$, by a calculation similar to the proof of \cite[Theorem 3.5]{fr}. Furthermore, $U$ implements a unitary equivalence between the restriction of $\pi$ and the representation $\rho :
\mathcal{B}_1\vee\mathcal{B}_2\to \mathcal{B}(H_1\otimes H_2)$ determined by
\begin{eqnarray*}
\rho(\X) &=& \pi(\X)\otimes  P_\xi \qquad \mbox{ for } \X \in
\mathcal{B}_1, \\
\rho(\X) &=& {\rm id}_{H_1} \otimes \pi(\X) \qquad \mbox{ for } \X \in \mathcal{B}_1,
\end{eqnarray*}
where $P_\xi$ denotes the orthogonal projection onto $\xi$. I.e., the algebra
$\rho(\mathcal{B}_1)$ acts as zero on the orthogonal complement of $H_1\otimes
\xi\cong H_1$ and is a von Neumann subalgebra of $\mathcal{B}(H_1)$, when
restricted to this subspace. It follows that $\rho(\mathcal{B}_1)$ contains
the operator $\tilde{P}={\rm id}_{H_1}\otimes P_\xi$. Set
$P=\rho^{-1}(\tilde{P})$, then one can check that $E(\X)=P\X P$ has all the
desired properties.
\end{proof}

The following theorem shows how to compute this conditional expectation in the
situations which we will need here, it can be proved as in \cite[Proposition 3.2]{fm}.

\begin{theo}\label{thm-sub-mon}
Let $(\mathcal{A},\phi)$ be a non-commutative probability space, $\mathcal{B}$
be a strongly closed *-subalgebra of  $\mathcal{A}$ , let  $\Y\in
\widetilde{\A}_{sa}$, be an self-adjoint element which is monotonically
independent with $\mathcal{B}$, and let $\X=\X^*\in
\widetilde{\mathcal{B}}_{sa}$. Denote by $F=L_\Y$ the reciprocal Cauchy-Stieltjes transform of $\Y$.
\begin{align}  
\E_\phi\big(\mathcal{R}_{\X+\Y}(z)|\mathcal{B})=\mathcal{R}_{\X}(F(z)). 
\end{align}
where $\mathcal{R}_{\X}(z)=(z-\X)^{-1}$ (i.e. resolvent map).
\end{theo}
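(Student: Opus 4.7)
The plan is to reduce the identity to a Schur complement calculation by working inside the explicit representation of $\mathcal{B}\vee \mathcal{B}_2$ constructed in the proof of Theorem~\ref{thm-con-exp-mon}, applied with $\mathcal{B}_1=\mathcal{B}$ and $\mathcal{B}_2$ the *-subalgebra generated by $\Y$, and then to match it with a parallel Schur calculation for $G_\Y$ alone.

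After identifying $\mathcal{B}\vee\mathcal{B}_2$ with its image under the representation on $H_1\otimes H_2$, one has $\X=\pi(\X)\otimes P_\xi$, $\Y=\mathrm{id}_{H_1}\otimes\hat\Y$, and $\E_\phi(T|\mathcal{B})=\tilde P T\tilde P$, with $\tilde P=\mathrm{id}_{H_1}\otimes P_\xi$ serving as the unit of $\mathcal{B}$. Two consequences of this tensor structure, both of which I will take for granted, are $\X=\tilde P\X=\X\tilde P$ and $\tilde P\Y\tilde P=\phi(\Y)\tilde P$. Writing $z-\X-\Y$ as a $2\times 2$ block matrix with respect to $(\tilde P,\tilde Q)$, $\tilde Q=I-\tilde P$, and noting that the off-diagonal blocks come entirely from $\Y$, the standard Schur complement formula gives
\[
\tilde P(z-\X-\Y)^{-1}\tilde P = \bigl[z\tilde P-\X-\phi(\Y)\tilde P - C(z)\bigr]^{-1}
\]
in $\tilde P\mathcal{A}\tilde P$, where $C(z)=\tilde P\Y\tilde Q\bigl(z\tilde Q-\tilde Q\Y\tilde Q\bigr)^{-1}\tilde Q\Y\tilde P$.

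The crux is then the identification $C(z)=(z-\phi(\Y)-F(z))\tilde P$. Because $\Y=\mathrm{id}_{H_1}\otimes\hat\Y$, the correction factors as $C(z)=c(z)\tilde P$ with the scalar $c(z)=\langle\eta,(z(I_{H_2}-P_\xi)-(I_{H_2}-P_\xi)\hat\Y(I_{H_2}-P_\xi))^{-1}\eta\rangle$ and $\eta=(I_{H_2}-P_\xi)\hat\Y\xi$. Applying exactly the same Schur complement argument to $(z-\hat\Y)^{-1}$ on $H_2$ relative to $(P_\xi,I_{H_2}-P_\xi)$ yields
\[
G_\Y(z)=\langle\xi,(z-\hat\Y)^{-1}\xi\rangle = \bigl[z-\phi(\Y)-c(z)\bigr]^{-1},
\]
so $c(z)=z-\phi(\Y)-L_\Y(z)=z-\phi(\Y)-F(z)$, as claimed.

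Substituting this back collapses the bracket in the first displayed formula to $F(z)\tilde P-\X$, giving $\E_\phi((z-\X-\Y)^{-1}|\mathcal{B})=(F(z)\tilde P-\X)^{-1}=\mathcal{R}_\X(F(z))$, which is the desired identity. The heart of the proof is the identification $c(z)=z-\phi(\Y)-F(z)$; this is precisely the scalar monotone convolution formula $L_{\X+\Y}=L_\X\circ L_\Y$ lifted to the operator-valued level via the tensor structure of the representation, and once it is spotted, the rest of the argument is routine Schur complement bookkeeping.
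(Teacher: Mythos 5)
Your argument is correct, and it is necessarily a different route from the paper's, because the paper gives no proof at all: it simply states that the result ``can be proved as in [fm, Proposition 3.2]'' (Franz--Muraki). Your Schur-complement computation in the tensor-product model $\X=\pi(\X)\otimes P_\xi$, $\Y=\mathrm{id}_{H_1}\otimes\hat\Y$, $\E_\phi(\cdot|\mathcal{B})=\tilde P(\cdot)\tilde P$ is a clean, self-contained substitute: the off-diagonal blocks of $z-\X-\Y$ indeed come only from $\Y$, the correction term factors through the rank-one projection $P_\xi$ and hence is the scalar $c(z)\tilde P$, and running the same Schur complement on $(z-\hat\Y)^{-1}$ in $H_2$ identifies $c(z)=z-\phi(\Y)-L_\Y(z)$, collapsing the compressed resolvent to $(F(z)\tilde P-\X)^{-1}$ in the corner algebra $\tilde P\mathcal{A}\tilde P$ (correctly interpreted, since $\tilde P$, not $\mathrm{I}$, is the unit of $\mathcal{B}$ here). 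What your approach buys is transparency: the subordination function for monotone independence is visibly the reciprocal Cauchy transform of $\Y$, with no combinatorial moment expansion, and the scalar identity $L_{\X+\Y}=L_\X\circ L_\Y$ drops out as the special case obtained by applying $\phi$. The one caveat you should flag is that the theorem is stated for possibly unbounded $\Y\in\widetilde{\A}_{sa}$, while your block decomposition implicitly treats $\Y$ as a bounded operator; for the affiliated case one must either work with resolvents of the compressions $\tilde Q\Y\tilde Q$ on appropriate domains or pass through bounded spectral truncations, which is exactly the technical content of \cite{fr} that the paper leans on. With that remark added, your proof could stand in the paper in place of the bare citation.
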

We will use the notation $\E_\phi(\cdot|\X)$ for the generalized
conditional expectation onto the strongly closed subalgebra generated by a
r.v.\ $\X$, but note that here this algebra will not be unital in general.

Muraki \cite{mu1} showed the additive monotone convolution is described by the composition of the reciprocal Cauchy-Stieltjes transforms, If $\X$ and $\Y$ are two monotonically independent self-adjoint operators with reciprocal Cauchy-Stieltjes transforms $L_\X$ and $L_\Y$, then $\X+\Y$ has the reciprocal Cauchy-Stieltjes transform $L_{\X+\Y}=L_\X\circ L_\Y$, and the Cauchy-Stieltjes transform
\begin{align}\label{mon-conv}
G_{\X+\Y}= G_\X\circ L_\Y.
\end{align}
This result was extended to posssibly unbounded, essentially self-adjoint operators in \cite{fr}. Note that Bercovici and Voiculescu's result in \cite{BercoVoiculecu} implies also in the case of monotone independence that any two of the three distributions of $\X$, $\Y$, and $\X+\Y$ uniquely the third.

\section{The main results and proofs of the theorems}

The following theorems are the main results of the paper. The first theorem was proved by Bo\.zejko and Bryc in \cite{BoBr}, authors used combinatorial approach and had to assume existence of all moments. Our proof is based on Biane's subordination trick of the conditional expectation of functions of $\X +\Y$ given by the algebra generated by $\X$. This allows us to relax conditions on moments, we assume only the existence of variance of random variables, which is natural assumption. The idea of this proof is close to the original Laha and Lukacs proof of the theorem \cite{LL}, with characteristic functions replaced by Cauchy-Stieltjes transforms. 

\begin{theo}[]
Suppose that $\X$,$\Y \in \widetilde{\A}_{sa}$  are free, self-adjoint, non-degenerate, centered $(\tau(\X)=\tau(\Y)=0)$, $\tau(\X^2+\Y^2)=1$ and there are numbers $\alpha,\beta>0$ and $\alpha+\beta =1$ such that  
\begin{align} 
\tau(\X|\X+\Y) =\alpha(\X+\Y) \label{eq:warunkowypierwszymment}\end{align}  
and
\begin{align} 
Var(\X|\X+\Y) =
\frac{\alpha \beta}{(b+1)}
 \big[\mathrm{I}+a(\X+\Y)+b(\X+\Y)^{2}\big] .  \label{eq:warjancjawarunkowa2}
\end{align}  
Then $\X/\sqrt{\alpha}$ and  $\Y/\sqrt{\beta}$ have the free Meixner  laws $\mu_{a/\sqrt{\alpha},b/\alpha}$ and  $\mu_{a/\sqrt{\beta},b/\beta}$, respectively. 
\label{twr:3}
 \end{theo}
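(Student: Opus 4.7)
The plan is to convert the two conditional-moment hypotheses into a functional equation for the Cauchy--Stieltjes transform of $\X+\Y$ via Biane's subordination (Theorem~\ref{twr:1}). Let $\mathcal{B}$ be the von Neumann algebra generated by $\X$, write $G=G_{\X+\Y}(z)$, $L=L_{\X+\Y}(z)=1/G$, and let $F$ be the subordination function, so that $\tau\bigl(\Rr_{\X+\Y}(z)\,|\,\mathcal{B}\bigr)=\Rr_\X(F(z))$ and $G=G_\X(F(z))$.

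First I would extract $F$ from the mean assumption. Using the module property of $\tau(\cdot|\mathcal{B})$ together with the identity $\X\Rr_\X(w)=w\Rr_\X(w)-\I$ gives
\[
\tau\bigl(\X\Rr_{\X+\Y}(z)\bigr)=F(z)\,G-1.
\]
On the other hand, testing \eqref{eq:warunkowypierwszymment} against $\Rr_{\X+\Y}(z)$ (which lies in the algebra generated by $\X+\Y$) and using the analogous resolvent identity for $\X+\Y$ gives $\tau(\X\Rr_{\X+\Y}(z))=\alpha(zG-1)$. Comparing the two expressions and using $\alpha+\beta=1$ yields
\[
F(z)=\alpha z+\beta L_{\X+\Y}(z).
\]

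Second, I would exploit the conditional variance. Since $\tau(\X|\X+\Y)$ lies in the commutative algebra generated by $\X+\Y$, combining \eqref{eq:warunkowypierwszymment} and \eqref{eq:warjancjawarunkowa2} gives
\[
\tau(\X^2|\X+\Y)=\alpha^2(\X+\Y)^2+\tfrac{\alpha\beta}{b+1}\bigl(\I+a(\X+\Y)+b(\X+\Y)^2\bigr).
\]
Testing this against $\Rr_{\X+\Y}(z)$, with $\tau(\X+\Y)=0$ and the identities $\tau((\X+\Y)\Rr_{\X+\Y}(z))=zG-1$ and $\tau((\X+\Y)^2\Rr_{\X+\Y}(z))=z^2G-z$, produces one expression for $\tau(\X^2\Rr_{\X+\Y}(z))$. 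A second expression comes from the subordination: using $\X^2\Rr_\X(w)=w^2\Rr_\X(w)-w\,\I-\X$ together with $G=G_\X(F(z))$ gives $\tau(\X^2\Rr_{\X+\Y}(z))=F(z)^2G-F(z)$, which after substituting $F=\alpha z+\beta L$ and using $LG=1$ simplifies to $\alpha^2 z^2 G+\alpha(\beta-\alpha)z-\alpha\beta L$. Equating both expressions and dividing by $\alpha\beta$ collapses to the quadratic
\[
(1+az+bz^2)\,G^2-\bigl((2b+1)z+a\bigr)G+(b+1)=0.
\]
A direct calculation shows its discriminant is $(z-a)^2-4(b+1)$, so $G_{\X+\Y}$ coincides with \eqref{eq:GtransformataMixner}, the branch being fixed by the asymptotic $G(z)\sim 1/z$ at infinity. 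Hence $\X+\Y$ has the free Meixner law $\mu_{a,b}$.

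Finally, the formula $F(z)=\alpha z+\beta L_{\X+\Y}(z)$ is exactly of the form described in Remark~\ref{rem-sub} with $\lambda=\beta/\alpha$, so $\mu_\Y=\mu_\X^{\boxplus\beta/\alpha}$ and consequently $\varphi_\X=\alpha\,\varphi_{a,b}$, with $\varphi_\Y=\beta\,\varphi_{a,b}$ by symmetry. A brief scaling computation using $\varphi_{c\X}(z)=c\,\varphi_\X(z/c)$ then yields $\varphi_{\X/\sqrt{\alpha}}(z)=\sqrt{\alpha}\,\varphi_{a,b}(\sqrt{\alpha}\,z)=\varphi_{a/\sqrt{\alpha},\,b/\alpha}(z)$, proving that $\X/\sqrt{\alpha}$ has law $\mu_{a/\sqrt{\alpha},b/\alpha}$ and symmetrically $\Y/\sqrt{\beta}$ has law $\mu_{a/\sqrt{\beta},b/\beta}$. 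The main obstacle I anticipate is the bookkeeping in the second step: one has to confirm that after substituting $F=\alpha z+\beta L$ and using $\alpha+\beta=1$, the two expressions for $\tau(\X^2\Rr_{\X+\Y}(z))$ really do reduce to a single quadratic in $G$ whose discriminant matches the free Meixner discriminant $(z-a)^2-4(b+1)$ on the nose.
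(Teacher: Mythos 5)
Your proposal is correct, and the core of it --- computing $\tau(\X\Rr_{\X+\Y}(z))$ and $\tau(\X^2\Rr_{\X+\Y}(z))$ twice, once through the regression hypotheses and once through Biane's subordination, then combining with $G_{\X+\Y}=G_\X\circ F$ to land on the quadratic $(1+az+bz^2)G^2-((2b+1)z+a)G+(b+1)=0$ --- is exactly the paper's argument; your algebra (the explicit form $F=\alpha z+\beta L_{\X+\Y}$, the simplification of $F^2G-F$, and the discriminant $(z-a)^2-4(b+1)$) all checks out. Where you genuinely diverge is the last step. The paper, having identified $\mu_{\X+\Y}=\mu_{a,b}$, only observes that $\X$ and $\Y$ must then be bounded and refers back to the combinatorial proofs of Bo\.zejko--Bryc and Ejsmont to pin down the individual laws. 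You instead stay inside the subordination framework: from $L_\X^{-1}\circ L_{\X+\Y}=F=\alpha\,\mathrm{id}+\beta L_{\X+\Y}$ you read off $\varphi_\X=\alpha\,\varphi_{\X+\Y}$ (and hence $\varphi_\Y=\beta\,\varphi_{\X+\Y}$ by additivity), and conclude by the scaling identity $\varphi_{c\X}(z)=c\,\varphi_\X(z/c)$. This is cleaner and self-contained, and it makes the relation to Remark \ref{rem-sub} explicit. One small point of rigor: the remark as stated gives only the implication ``$\nu=\mu^{\boxplus\lambda}$ $\Rightarrow$ $F$ has this form,'' so rather than citing it for the converse you should derive $\varphi_\X=\alpha\varphi_{\X+\Y}$ directly by substituting $w=L_{\X+\Y}(z)$ into $L_\X^{-1}(L_{\X+\Y}(z))=\alpha z+\beta L_{\X+\Y}(z)$, which takes one line and closes the gap.
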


The next theorem gives an analog of this result for monotone independence. For
monotonically independent subalgebras we have only generalized conditional
expectations onto the past, therefore we have to reformulate the regression
property. We say that $\X$ and $\Y$ satisfy a quadratic regression with
parameters $\alpha$, $\beta$, $a$, and $b$, if
\begin{align} 
\phi\big(\X f(\X+\Y)\big) =\alpha\phi\big((\X+\Y)f(\X+\Y)\big)
\end{align}  
and
\begin{align}  
\phi\big(\X f(\X+\Y)\X\big) = \phi\Big((\alpha \beta\big(
 \mathrm{I}+a(\X+\Y)+b(\X+\Y)^{2}\big)/(b+1)+\alpha^2(\X+\Y)^2)f(\X+\Y)\Big)
\end{align}
for all bounded continuous functions $f\in C_b(\mathbb{R})$. Note that it follows from the definition of monotone independence that we have
\[
\phi\big(\X^2 f(\X+\Y)\big) = \phi\big(\X f(\X+\Y)\X\big) = \phi\big( f(\X+\Y)\X^\big).
\]

When the conditional expectation of $\X$ and $\X^2$ onto the algebra generated
by $\X+\Y$ exists, then the quadratic regression property defined above is
equivalent to the conditions \eqref{eq:warunkowypierwszymment} and \eqref{eq:warjancjawarunkowa2} we used in the previous theorem.

\begin{theo}[]
Suppose that $\X$,$\Y \in \widetilde{\A}_{sa}$  are monotonically independent,
self-adjoint, non-degenerate, centered $(\tau(\X)=\tau(\Y)=0)$,
$\tau(\X^2+\Y^2)=1$ and that they satisfy a quadratic regression with
parameters $\alpha,\beta>0$, $\alpha+\beta =1$.

Then $\X/\sqrt{\alpha}$ and  $(\X+\Y)$ have the free Meixner  laws $\mu_{a/\sqrt{\alpha},b/\alpha}$ and  $\mu_{a,b}$, respectively. 
\label{mono}
 \end{theo}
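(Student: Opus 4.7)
My plan is to adapt the strategy used for Theorem \ref{twr:3}, with the monotone subordination result Theorem \ref{thm-sub-mon} and the monotone convolution identity $L_{\X+\Y}=L_\X\circ L_\Y$ replacing the free analogues. Let $\mathcal{B}$ be the strongly closed $*$-subalgebra generated by $\X$, so that $\Y$ is monotonically independent of $\mathcal{B}$, and fix $z\in\mathbb{C}^+$. I will specialize the two regression identities to $f(\lambda)=(z-\lambda)^{-1}$, a bounded continuous function on $\mathbb{R}$, so that by functional calculus $f(\X+\Y)=\mathcal{R}_{\X+\Y}(z)$; the identities are stated for real-valued $f$ but extend to complex-valued $f\in C_b(\mathbb{R})$ by splitting into real and imaginary parts.

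The left-hand sides of the two regressions are simplified using the module property of the generalized conditional expectation $\E_\phi(\cdot|\mathcal{B})$ together with Theorem \ref{thm-sub-mon}, which says $\E_\phi(\mathcal{R}_{\X+\Y}(z)|\mathcal{B})=\mathcal{R}_\X(F(z))$ with $F=L_\Y$. Using the algebraic identities $\X\mathcal{R}_\X(w)=w\mathcal{R}_\X(w)-\mathrm{I}$ and $\X^2\mathcal{R}_\X(w)=w^2\mathcal{R}_\X(w)-w\mathrm{I}-\X$ together with $\phi(\X)=0$ and $G_{\X+\Y}=G_\X\circ F$, these evaluate to
$$
\phi(\X\,\mathcal{R}_{\X+\Y}(z))=F(z)G(z)-1,\qquad \phi(\X\,\mathcal{R}_{\X+\Y}(z)\,\X)=F(z)^2G(z)-F(z),
$$
with $G=G_{\X+\Y}$. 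The corresponding right-hand sides follow from $\phi((\X+\Y)^k\mathcal{R}_{\X+\Y}(z))\in\{G,\,zG-1,\,z^2G-z\}$ for $k\in\{0,1,2\}$ (using $\phi(\X+\Y)=0$).

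The first regression then collapses to $F(z)=\alpha z+\beta/G(z)=\alpha z+\beta L_{\X+\Y}(z)$. Substituting this into the second identity, the $\alpha^2 z^2 G$ terms cancel and a routine collection of terms yields the single quadratic relation
$$
(bz^2+az+1)G(z)^2-\bigl((2b+1)z+a\bigr)G(z)+(b+1)=0,
$$
whose discriminant simplifies to $(z-a)^2-4(b+1)$. Selecting the correct Nevanlinna branch recovers exactly (\ref{eq:GtransformataMixner}), i.e.\ $\X+\Y\sim\mu_{a,b}$.

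Finally, to identify the law of $\X$, I combine $L_\Y=\alpha\,\mathrm{id}+\beta L_{\X+\Y}$ with $L_{\X+\Y}=L_\X\circ L_\Y$; substituting $v=L_\X(u)$ and $u=L_\X^{-1}(v)$ rewrites the first as $L_\X^{-1}(v)=\alpha L_{\X+\Y}^{-1}(v)+\beta v$, equivalently $\varphi_\X(v)=\alpha\varphi_{a,b}(v)$. By the explicit formula (\ref{meixner-phi}) and the dilation rule $\varphi_{c\X}(z)=c\varphi_\X(z/c)$, this matches the Voiculescu transform of the $\sqrt{\alpha}$-dilation of $\mu_{a/\sqrt{\alpha},b/\alpha}$, and hence $\X/\sqrt{\alpha}\sim\mu_{a/\sqrt{\alpha},b/\alpha}$. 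The main obstacle is the bookkeeping required to derive the quadratic equation and to identify the correct branch of its Nevanlinna solution; beyond that the proof is a clean adaptation of the free case.
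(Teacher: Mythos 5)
Your proposal is correct and follows essentially the same route as the paper: evaluate the two regression identities on the resolvent, use Theorem \ref{thm-sub-mon} with $F=L_\Y$ and $G_{\X+\Y}=G_\X\circ L_\Y$ to obtain the same system \eqref{eq:pom1dowod1}--\eqref{eq:pom2dowod1} as in the free case, solve the resulting quadratic for $G_{\X+\Y}$, and then recover $G_\X$ from $F$. Your final identification of the law of $\X$ via $\varphi_\X=\alpha\varphi_{a,b}$ and the dilation rule is just a more explicit rendering of the paper's closing step (``$G_{\X+\Y}$ and $F$ determine $G_\X$''), and it checks out against \eqref{meixner-phi}.
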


\begin{Rem}
Because of the ``non-unitality'' of monotone independence, the assumption that
$\X$ is centered is more important than in the free case, where it could
easily be removed.
\end{Rem}

Next theorem gives a similar generalization of a result from \cite{SzWes} where authors proved characterization of free binomial and free Piosson random variable. This result is a free analogue of classical probability characterization proved in \cite{BobWes}.
\begin{theo}
\label{twr:4}
Suppose that $\U,\V \in \widetilde{\A}_{sa}$ are free, self-adjoint, non-degenerate and $\V,\U$ have a distribution  supported on $(0,\infty]$ and  distribution of $\U$ is supported on $[0,\infty]$. Assume that there exist real constants $c$ and $d$ such that
\begin{align}
\tau \left(\left.\V-\V^{\frac{1}{2}}\,\U\,\V^{\frac{1}{2}}\right|\V^{\frac{1}{2}}\,\U\,\V^{\frac{1}{2}}\right)=c\,\I \label{eq:warunkowypierwszymmentDualny}
\end{align}
and
\begin{align}
\tau\left(\left.(\V-\V^{\frac{1}{2}}\,\U\,\V^{\frac{1}{2}})^2\right|\V^{\frac{1}{2}}\,\U\,\V^{\frac{1}{2}}\right)=d\,\I.\label{eq:warjancjawarunkowaDualna}
\end{align}
Then  $\V$ has free-Poisson distribution, $\nu(\lambda,\alpha)$ with $\lambda=\sigma+\theta$, $\left(\sigma=\frac{c\big(\tau(\V\U)+2c-2\tau(\V)\big)}{c^2-d},\theta=\frac{c^2}{d-c^2 }\right)$, $\alpha=\frac{d-c^2}{c}$ and $\U$ has free-binomial distribution,  $\beta(\sigma,\theta)$.
\end{theo}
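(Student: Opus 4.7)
Set $T=\V^{1/2}\U\V^{1/2}$ and let $\mathcal{B}=W^*(\V)$; Biane's multiplicative subordination (Theorem~\ref{twr:2}) then provides an analytic $F$ with $\tau(\Psi_T(\xi)\,|\,\mathcal{B})=\Psi_V(F(\xi))$ and $\psi_{\mu_T}(\xi)=\psi_{\mu_V}(F(\xi))$. The plan is to apply each of the two hypotheses to the test function $f(T)=\Psi_T(\xi)$, simplify the left-hand side using Lemma~\ref{lem:1} (which reduces $T^k\Psi_T(\xi)$ to $\Psi_T(\xi)$ plus polynomial corrections), and simultaneously compute the same quantity via Biane by pulling $\V^k$ past the conditional expectation and again invoking Lemma~\ref{lem:1} on $\V^k\Psi_V(F(\xi))$. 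Matching the two expressions produces functional identities which, after setting $z=\psi_T(\xi)=\psi_V(F(\xi))$ so that $\xi=\chi_T(z)$ and $F(\xi)=\chi_V(z)$, convert via $1/\chi_\mu(z)=(z+1)/(zS_\mu(z))$ into algebraic relations among $S_V(z)$, $S_T(z)$, and $S_U(z)=S_T(z)/S_V(z)$.

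Applied to the first hypothesis \eqref{eq:warunkowypierwszymmentDualny}, evaluating $\tau(\V\Psi_T(\xi))$ in the two ways described gives
\[
\tfrac{1}{\xi}\psi_T(\xi)-\tau(T)+c\,\psi_T(\xi)=\tau(\V\Psi_T(\xi))=\tfrac{1}{F(\xi)}\psi_T(\xi)-\tau(\V),
\]
and substituting $\tau(\V)-\tau(T)=c$ the identity collapses, after using \eqref{Str}, to
\[
\frac{1}{S_V(z)}-\frac{1}{S_T(z)}=c. \qquad(\star)
\]
For the second hypothesis \eqref{eq:warjancjawarunkowaDualna} I expand $(\V-T)^2$, use the first hypothesis with test function $T\Psi_T(\xi)$ to remove the cross term $\tau(\V T\Psi_T(\xi))$, and apply the $n=2$ case of Lemma~\ref{lem:1} to both $\tau(T^2\Psi_T(\xi))$ and $\tau(\V^2\Psi_V(F(\xi)))$; after substituting the scalar consequences $\tau(\V^2)-\tau(T^2)=d+2c\tau(T)$ and $\tau(\V)=c+\tau(T)$ of the hypotheses and then applying $(\star)$ to eliminate $1/S_V(z)$, a factor $z+1$ cancels from both sides and the equation becomes linear in $S_T(z)$, yielding
\[
S_T(z)=\frac{c}{c\,\tau(\V\U)+(d-c^2)z}.
\]
Comparison with \eqref{eq:StransformataPoison} identifies $\mu_T$ as a free Poisson law, and $(\star)$ forces $S_V$ to have the same free Poisson form with $\alpha=(d-c^2)/c$ and $\alpha\lambda=c+\tau(\V\U)$, i.e.\ $\lambda=\sigma+\theta$. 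Finally $S_U(z)=1+cS_T(z)$ matches the free binomial $S$-transform \eqref{eq:StransormataBainomial} for $\beta(\sigma,\theta)$ with $\theta=c^2/(d-c^2)$ and $\sigma=c\tau(\V\U)/(d-c^2)$; the freeness identity $\tau(\V\U)=\tau(\V)\tau(\U)$ together with $\tau(\V)=c+\tau(\V\U)$ rewrites this $\sigma$ in the form given in the statement.

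The main technical obstacle is the bookkeeping in the second step: several $\tau(\V T)$, $\tau(\V^2)$, and $\tau(T^2)$ terms have to be eliminated by repeated use of both hypotheses with well-chosen test functions, and the decisive simplification is that once $(\star)$ is invoked the factor $z+1$ cancels and the residual equation becomes linear---rather than quadratic---in $S_T(z)$. A secondary sanity check needed for admissibility of the parameters is that $d>c^2$, so that $\alpha$ and $\theta$ are strictly positive; this follows from non-degeneracy, because $d-c^2=\tau((\V-T)^2)-\tau(\V-T)^2$ vanishes only when $\V-T=\V^{1/2}(\I-\U)\V^{1/2}$ is a scalar, which would force $\U\in W^*(\V)$ and hence, by freeness of $\U$ and $\V$, make $\U$ itself scalar, contradicting non-degeneracy.
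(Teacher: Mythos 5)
Your proposal is correct and follows essentially the same route as the paper: Biane's multiplicative subordination (Theorem \ref{twr:2}) combined with Lemma \ref{lem:1}, evaluating $\tau(\V^k\Psi_{\V^{1/2}\U\V^{1/2}}(\xi))$ for $k=1,2$ in two ways and passing to $S$-transforms via $\chi$. The only difference is organizational --- you extract the linear relation $1/S_{\V}-1/S_{\V^{1/2}\U\V^{1/2}}=c$ from the first-moment equation and use it to linearize the second-moment equation, whereas the paper eliminates the subordination function to obtain two separate quadratic equations in $\psi_{\V^{1/2}\U\V^{1/2}}$ and $\psi_{\V}$ and inverts each; your closing verification that $d>c^2$ (hence $\alpha,\theta>0$) is a small, correct addition that the paper omits.
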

The next result is a free analogue of characterization of the Beta distribution of the first kind proved by Weso\l{}owski and Seshadri in \cite{SeshWes}.
\begin{theo}
\label{tw:binomal}
Let $\X,\Y\in \widetilde{\A}_{sa}$ be free, selfadjoint and non-degenerate. Moreover assume that 
\begin{align}
\label{eq:binomal1}
\tau\left(\I-\Y|\Y^{\frac{1}{2}}\X\Y^{\frac{1}{2}}\right)=&c\left(\I-\Y^{\frac{1}{2}}\X\Y^{\frac{1}{2}}\right)
\\
\label{eq:binomal2}
\tau\left(\left(\I-\Y\right)^{-1}|\Y^{\frac{1}{2}}\X\Y^{\frac{1}{2}}\right)=&d\left(\I-\Y^{\frac{1}{2}}\X\Y^{\frac{1}{2}}\right)^{-1}
\end{align} 
for some real constants $c,d$.

Then $\X$ has free binomial distribution with parameters  $\sigma_\X=\frac{(1-c)d\alpha}{cd-1}>0$ and $\theta_\X=\frac{(c-1)(d-1)}{1-cd}>0$, and $\Y$ has free binomial distribution with parameters  $\sigma_\Y=\frac{(1-c) (d(\alpha+1)-1)}{cd-1}>0$ and $\theta_\Y=\frac{c(1-d)}{1-cd}>0$.
\end{theo}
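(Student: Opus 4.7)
The plan is to invoke Biane's multiplicative subordination theorem (Theorem \ref{twr:2}) with $\mathcal{B}$ the von Neumann algebra generated by $\Y$, so that $\mathbf{W}:=\Y^{1/2}\X\Y^{1/2}$ has distribution $\mu_\X\boxtimes\mu_\Y$ and there is an analytic subordination $F$ satisfying
\[
\tau\bigl(\Psi_\mathbf{W}(\xi)\,|\,\Y\bigr)=\Psi_\Y(F(\xi)),\qquad \psi_{\mu_\mathbf{W}}(\xi)=\psi_{\mu_\Y}(F(\xi)).
\]
I would then multiply each regression hypothesis on the right by $\Psi_\mathbf{W}(\xi)$ and take the trace. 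On each left-hand side the defining property of the conditional expectation $\tau(\cdot|\mathbf{W})$ removes the conditioning, and then Biane's subordination rewrites the resulting trace as a trace against $\Psi_\Y(F(\xi))$, yielding
\begin{align*}
\tau\bigl((\I-\Y)\Psi_\Y(F(\xi))\bigr)&=c\,\tau\bigl((\I-\mathbf{W})\Psi_\mathbf{W}(\xi)\bigr),\\
\tau\bigl((\I-\Y)^{-1}\Psi_\Y(F(\xi))\bigr)&=d\,\tau\bigl((\I-\mathbf{W})^{-1}\Psi_\mathbf{W}(\xi)\bigr).
\end{align*}

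Next these would be converted into scalar functional equations. Lemma \ref{lem:1} (with $n=1$) gives $\tau((\I-\mathbf{A})\Psi_\mathbf{A}(\zeta))=\psi_\mathbf{A}(\zeta)(1-1/\zeta)+\tau(\mathbf{A})$ for any positive $\mathbf{A}$, while the partial fraction identity
\[
\Psi_\mathbf{A}(\zeta)(\I-\mathbf{A})^{-1}=\tfrac{\zeta}{1-\zeta}\bigl[(\I-\mathbf{A})^{-1}-(\I-\zeta\mathbf{A})^{-1}\bigr],
\]
combined with $\tau((\I-\zeta\mathbf{A})^{-1})=\psi_\mathbf{A}(\zeta)+1$, yields $\tau(\Psi_\mathbf{A}(\zeta)(\I-\mathbf{A})^{-1})=\tfrac{\zeta}{1-\zeta}(\psi_\mathbf{A}(1)-\psi_\mathbf{A}(\zeta))$. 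Writing $u=\psi_\mathbf{W}(\xi)=\psi_\Y(F(\xi))$ and using $\tau(\mathbf{W})=\tau(\X)\tau(\Y)$ by freeness, the two conditions reduce to
\begin{align*}
u\bigl(1-\tfrac{1}{F(\xi)}\bigr)+\tau(\Y)&=c\bigl[u\bigl(1-\tfrac{1}{\xi}\bigr)+\tau(\X)\tau(\Y)\bigr],\\
\tfrac{F(\xi)}{1-F(\xi)}\bigl(\psi_\Y(1)-u\bigr)&=d\,\tfrac{\xi}{1-\xi}\bigl(\psi_\mathbf{W}(1)-u\bigr).
\end{align*}

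Finally I would pass to $S$-transforms via $\xi=\tfrac{u}{1+u}S_\X(u)S_\Y(u)$ and $F(\xi)=\tfrac{u}{1+u}S_\Y(u)$, using $\chi_\Y(u)=\tfrac{u}{1+u}S_\Y(u)$ together with $S_\mathbf{W}=S_\X S_\Y$. The first functional equation then becomes a rational relation from which $S_\Y$ can be expressed as an explicit rational function of $S_\X$ and $u$; substituting this into the second and clearing denominators should produce a polynomial identity in $u$ forcing $S_\X(u)$ to be M\"obius, in fact of exactly the free-binomial form $S_\X(u)=(u+\sigma_\X+\theta_\X)/(u+\sigma_\X)$ from \eqref{eq:StransormataBainomial}, and analogously for $S_\Y$. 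Matching coefficients then identifies $(\sigma_\X,\theta_\X)$ and $(\sigma_\Y,\theta_\Y)$ in terms of $c$, $d$ and $\alpha:=\tau(\Y)$; the trace-level consequences $1-\alpha=c(1-\tau(\X)\alpha)$ and $\psi_\Y(1)+1=d(\psi_\mathbf{W}(1)+1)$ of the two hypotheses fix the normalizations needed to recover the formulas in the statement. The principal obstacle is precisely this last algebraic reduction: one must eliminate the free constants $\psi_\Y(1)$ and $\psi_\mathbf{W}(1)$ from the coupled system before the rationality of $S_\X$ can be read off, and one must also verify that the resulting parameters satisfy the positivity constraints that define a genuine free-binomial law.
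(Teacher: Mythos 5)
Your proposal follows essentially the same route as the paper's proof: Biane's multiplicative subordination (Theorem \ref{twr:2}) with conditioning on the algebra generated by $\Y$, evaluation of $\tau\left((\I-\Y)\Psi_{\Y^{1/2}\X\Y^{1/2}}(\xi)\right)$ and $\tau\left((\I-\Y)^{-1}\Psi_{\Y^{1/2}\X\Y^{1/2}}(\xi)\right)$ in two ways, the $n=1$ case of Lemma \ref{lem:1} together with the partial-fraction identity (the paper's \eqref{eq:psi_tr2}), and the trace-level consequences of \eqref{eq:binomal1}--\eqref{eq:binomal2} to eliminate $\psi_\Y(1)$; your two scalar functional equations are precisely the paper's \eqref{eq:Bin_fin2} and \eqref{eq:Bin_fin1}. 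The step you single out as the principal obstacle is resolved in the paper by an observation you do not quite make: written with the reciprocal prefactors $\frac{F-1}{F}$ and $\frac{F}{F-1}$, the two equations can simply be \emph{multiplied}, so that the unknown subordination function $F$ cancels identically and one is left with a single equation that is linear in $\frac{z}{z-1}$; solving it gives $\chi_{\Y^{1/2}\X\Y^{1/2}}$ (hence $S_{\X\Y}$) in closed form, the same multiplication after substituting $z=F^{-1}(z)$ gives $\chi_\Y$, and $S_\X=S_{\X\Y}/S_\Y$ via \eqref{Scon} finishes the identification against \eqref{eq:StransormataBainomial}. Your alternative elimination is also viable --- the first equation in $S$-transform variables collapses to the $u$-independent relation $S_\Y=\frac{S_\X-c}{(1-c)S_\X}$, which can be fed into the second --- but it is messier and you have not carried it out. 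One concrete slip: the constant $\alpha$ appearing in the theorem's parameters is $\psi_{\Y^{1/2}\X\Y^{1/2}}(1)$ (as defined in the paper's proof), not $\tau(\Y)$ as you set it, so with your normalization the final parameter formulas would not match the statement.
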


We now present the proof of our results.

\subsection{Proof of Theorem \ref{twr:3}}
\begin{proof}
It is easy to check that 
\begin{align}  
&\X\mathcal{R}_{\X}(z)=-\I+z\mathcal{R}_{\X}(z) , \label{eq:resowenta1}\\&
\X^2\mathcal{R}_{\X}(z)=-z-\X+z^2\mathcal{R}_{\X}(z). \label{eq:resowenta2}
\end{align}  
We will first compute $\tau(\X\mathcal{R}_{\X+\Y}(z))$ in two different ways.
\\ Let us first observe that 
\begin{align}  
&\tau(\X\mathcal{R}_{\X+\Y}(z))=\tau\big(\tau(\X\mathcal{R}_{\X+\Y}(z)|\X+\Y)\big)=
\tau\big(\tau(\X|\X+\Y)\mathcal{R}_{\X+\Y}(z)\big)\nonumber\\&\stackrel{(\ref{eq:warunkowypierwszymment}) }{=}\alpha\tau\big((\X+\Y)\mathcal{R}_{\X+\Y}(z)\big)\stackrel{(\ref{eq:resowenta1})}{=}
\alpha\big(zG_{\X+\Y}(z)-1\big).
\end{align}
Using Theorem \ref{twr:1} we compute $\tau(\X\mathcal{R}_{\X+\Y}(z))$ as follows  
\begin{align}  
&\tau(\X\mathcal{R}_{\X+\Y}(z))=\tau\big(\tau(\X\mathcal{R}_{\X+\Y}(z)|\X)\big)\nonumber\\&=\tau\big(\X\tau(\mathcal{R}_{\X+\Y}(z)|\X)\big)=\tau\big(\X\mathcal{R}_{\X}(F(z))\big)=F(z)G_\X\big(F(z)\big)-1.
\end{align}
Similar argument applied to the expression $\tau(\X^2\mathcal{R}_{\X+\Y}(z))$ give us 
\begin{align}  
&\tau(\X^2\mathcal{R}_{\X+\Y}(z))=\tau\big(\tau(\X^2\mathcal{R}_{\X+\Y}(z)|\X+\Y)\big)=\tau\big(\tau(\X^2|\X+\Y)\mathcal{R}_{\X+\Y}(z)\big)\\&=\tau\big(\big[\alpha \beta(
 \mathrm{I}+a(\X+\Y)+b(\X+\Y)^{2})/(b+1)+\alpha^2(\X+\Y)^2\big]\mathcal{R}_{\X+\Y}(z)\big)\\&\nonumber\stackrel{(\ref{eq:warjancjawarunkowa2})}{=}
\Big(\alpha\beta\big(G_{\X+\Y}(z)+azG_{\X+\Y}(z)-a-zb+z^2bG_{\X+\Y}(z)\big)/(b+1)+\alpha^2(-z+z^2G_{\X+\Y}(z))\Big).
\end{align}
and
\begin{align}  
&\tau(\X^2\mathcal{R}_{\X+\Y}(z))=\tau\big(\X^2\tau(\mathcal{R}_{\X+\Y}(z)|\X)\big)=\tau\big(\X^2\mathcal{R}_{\X}F(z)\big)\stackrel{(\ref{eq:resowenta2})}{=}-F(z)+F^2(z)G_\X\big(F(z)\big).
\end{align}
This gives the system of equations
\begin{align}&  
\alpha\big(zG_{\X+\Y}(z)-1\big)=F(z)G_\X\big(F(z)\big)-1, \label{eq:pom1dowod1}
\\ 
&\frac{\alpha\beta\big((1+az+z^2b)G_{\X+\Y}(z)-a-zb\big)}{b+1}+\alpha^2(-z+z^2G_{\X+\Y}(z)) =-F(z)+F^2(z)G_\X\big(F(z)\big) \label{eq:pom2dowod1}
 \end{align}
Finally, \eqref{eq:pom1dowod1} and \eqref{eq:pom2dowod1} together with $G_{\X+\Y}(z)=G_{\X}(F(z))$ give 

$$(1+za+bz^2)G^2_{\X+\Y}(z) -(z+a+2bz)G_{\X+\Y}(z)+1+b=0,$$
which has the solution
\begin{eqnarray}
G_{\X+\Y}(z)=\frac{(1 + 2b)z + a -\sqrt{(z - a)^2 - 4(1 + b)}}{2(bz^2 + az + 1)}. \label{eq:GtransformataMixnerDowod}
\end{eqnarray}
because $\Im(z)>0\Rightarrow \Im(G_\mu(z))\leqslant 0$, which means that $\X+\Y$ has free Meixner law. 
From this we also deduce that $\X$ and $\Y$ are bounded random variable and we can proceed analogously to the proof of Theorem 3.2 or  2.3   from \cite{BoBr} and \cite{Ejs}, respectively. 

\end{proof}

\subsection{Proof of Theorem \ref{mono}}

\begin{proof}
We compute again the expections of $\X\mathcal{R}_{\X+\Y}(z)$ and
$\X^2\mathcal{R}_{\X+\Y}(z)$ in two different ways. Applying the quadratic
regression property and Theorem \ref{thm-sub-mon} to $\X\mathcal{R}_{\X+\Y}(z)$, we get
\[
\phi(\X\mathcal{R}_{\X+\Y}(z)\big) =
\phi\left(\alpha\phi\big((\X+\Y)\mathcal{R}_{\X+\Y}(z)\big)\right)
= \alpha\big(G_{\X+\Y}(z)-1\big)
\]
and 
\[
\phi(\X\mathcal{R}_{\X+\Y}(z)\big) =
\phi\Big(\X\E_\phi\big(\mathcal{R}_{\X+\Y}(z)|\X\big)\Big) =
\phi\Big(\X\mathcal{R}_X\big(L_\Y(z)\big)\Big) = L_\Y(z) G_\Z\big(L_\Y(z)\big)-1.
\]
For $\X^2\mathcal{R}_{\X+\Y}(z)$ we obtain similarly,
\begin{align}
& \phi\big(\X^2\mathcal{R}_{\X+\Y}(z)\big) = \phi\big(\big[\alpha \beta(
 \mathrm{I}+a(\X+\Y)+b(\X+\Y)^{2})/(b+1)+\alpha^2(\X+\Y)^2\big]\mathcal{R}_{\X+\Y}(z)\big)  \\
& = \Big(\alpha\beta\big(G_{\X+\Y}(z)+azG_{\X+\Y}(z)-a-zb+z^2bG_{\X+\Y}(z)\big)/(b+1)+\alpha^2\big(-z+z^2G_{\X+\Y}(z)\big)\Big)
\end{align}
and
\[
\phi(\X^2\mathcal{R}_{\X+\Y}(z))=\phi\big(\X^2\mathcal{R}_{\X+\Y}(z)|\X)\big)=\phi\big(\X^2\mathcal{R}_{\X}\Big(L_Y(z)\big)\Big)
= -L_\Y(z)+L_\Y^2(z)G_\X\big(L_\Y(z)\big).
\]
We conclude that $G_\X$, $G_{\X+\Y}$ and $F=L_Y$ satisfy again the system of
equations
\eqref{eq:pom1dowod1} and \eqref{eq:pom2dowod1}. Since we also have again
$G_{\X+\Y}=G_\X\circ F$, see Equation \eqref{mon-conv}, we can solve the system in the same way as in the proof of the previous theorem. In the first step we see that $G_{\X+\Y}$ is again given by Equation \eqref{eq:GtransformataMixnerDowod}, i.e.\ $\X+\Y$ has free Meixner distribution $\mu_{a,b}$.

Using $G_\X\circ F= G_{\X+\Y}$ and substituting the formula for $G_{\X+\Y}$ into Equation \eqref{eq:pom1dowod1}, we get $F$. The conditions on $F$, which was in the free case the subordination function in Biane's theorem, imply that it is the reciprocal Cauchy-Stieltjes transform of some probability measure on $\mathbb{R}$. By Bercovici and Voiculescu's result \cite{BercoVoiculecu} $F$ is invertible on some appropriate domain, therefore $G_{\X+\Y}$ and $F$ determine $G_\X$. We see that $\X$ has free Meixner law $\mu_{a/\sqrt{\alpha},b/\alpha}$.
\end{proof}

\begin{Rem}
The only difference between the free and the monotone characterisations of the free Meixner law lies in the distribution of $\Y$. In the free case $F$ is the subordination function and $G_\Y$ is computed from $G_\X$ and $G_{\X+\Y}$. In the monotone case $F$ is the reciprocal Cauchy-Stieltjes transform $L_\Y$ of $\Y$. From Equation \eqref{eq:pom1dowod1} we get
\begin{eqnarray*}
L_\Y(z) &=& \alpha z + (1-\alpha)L_{\X+\Y}(z)\\
&=& \frac{1}{2(1+b)}\left((1+\alpha+2b)z
  + \beta a + \beta\sqrt{(z-a)^2-4(1+b)}\right)
\end{eqnarray*}
and therefore
\[
G_\Y(z)= \frac{(1+\beta+2b)z + \beta a - \beta\sqrt{(z-a)^2-4(1+b)}}{4\big((\alpha+b)z^2 + a \beta z +1]\big)}.
\]
The law of $\Y$ the $\beta$-th boolean convolution power of the law of $\X+\Y$, cf.\ \cite{SpWo}, which is also clear from Remark \ref{rem-sub}. Anshelevich \cite{An2} showed that the Boolean Meixner family also coincides with the free Meixner family, but it fails to satisfy a Boolean Laha-Lukacs-type characterization.
\end{Rem}

\subsection{Proof of Theorem \ref{twr:4}}
\begin{proof} The main idea of the proof is similar to the proof of the Theorem \ref{twr:3}.\\
From Lemma \ref{lem:1} we have  
\begin{align} &\X\Psi_{\X}(z)=-\X+\Psi_{\X}(z)/z, \label{eq:pom1dowod2}
\\  &\X^2\Psi_{\X}(z)=-\X^2-\X/z+\Psi_{\X}(z)/z^2.\label{eq:pom2dowod2}
\end{align}  
 As in the proof of Theorem \ref{twr:3}, we will calculate conditional expectation in two different ways. \\
Let us first proceed with
$\tau(\V\Psi_{\V^{\frac{1}{2}}\,\U\,\V^{\frac{1}{2}}}(z))$. 
We have 
\begin{align}  
&\tau(\V\Psi_{\V^{\frac{1}{2}}\,\U\,\V^{\frac{1}{2}}}(z))
=\tau\big(\tau(\V\Psi_{\V^{\frac{1}{2}}\,\U\,\V^{\frac{1}{2}}}(z)|\V^{\frac{1}{2}}\,\U\,\V^{\frac{1}{2}})\big)=
\tau\big(\tau(\V|\V^{\frac{1}{2}}\,\U\,\V^{\frac{1}{2}})\Psi_{\V^{\frac{1}{2}}\,\U\,\V^{\frac{1}{2}}}(z)\big)
\nonumber\\&\stackrel{(\ref{eq:warunkowypierwszymmentDualny})}{=} \tau\big([c+\V^{\frac{1}{2}}\,\U\,\V^{\frac{1}{2}}]\Psi_{\V^{\frac{1}{2}}\,\U\,\V^{\frac{1}{2}}}(z)\big)\stackrel{(\ref{eq:pom1dowod2})}{=}
c\psi_{\V^{\frac{1}{2}}\,\U\,\V^{\frac{1}{2}}}(z)-\tau(\V\U)+\psi_{\V^{\frac{1}{2}}\,\U\,\V^{\frac{1}{2}}}(z)/z.
\end{align}
Using Theorem \ref{twr:4}, it follows that
\begin{align}  
&\tau(\V\Psi_{\V^{\frac{1}{2}}\,\U\,\V^{\frac{1}{2}}}(z))=\tau\big(\tau(\V\Psi_{\V^{\frac{1}{2}}\,\U\,\V^{\frac{1}{2}}}(z)|\V)\big)\nonumber
\\&=\tau\big(\V\tau(\Psi_{\V^{\frac{1}{2}}\,\U\,\V^{\frac{1}{2}}}(z)|\V)\big)=\tau\big(\V\Psi_{\V}(F(z))\big)=\psi_\V\big(F(z)\big)/F(z)-\tau(\V).
\end{align}
By a similar argument we get
\begin{align}  
&\nonumber\tau(\V^2\Psi_{\V^{\frac{1}{2}}\,\U\,\V^{\frac{1}{2}}}(z))
=\tau\big(\tau(\V^2|\V^{\frac{1}{2}}\,\U\,\V^{\frac{1}{2}})\Psi_{\V^{\frac{1}{2}}\,\U\,\V^{\frac{1}{2}}}(z)\big)
\stackrel{(\ref{eq:warjancjawarunkowaDualna})}{=}\\\nonumber&\tau\big(\big[
 d\mathrm{I}+2(c+\V^{\frac{1}{2}}\,\U\,\V^{\frac{1}{2}})\V^{\frac{1}{2}}\,\U\,\V^{\frac{1}{2}}-(\V^{\frac{1}{2}}\,\U\,\V^{\frac{1}{2}})^2]\Psi_{\V^{\frac{1}{2}}\,\U\,\V^{\frac{1}{2}}}(z)\big)\\&\nonumber\stackrel{(\ref{eq:pom2dowod2})}{=}
d\psi_{\V^{\frac{1}{2}}\,\U\,\V^{\frac{1}{2}}}(z)+2c\psi_{\V^{\frac{1}{2}}\,\U\,\V^{\frac{1}{2}}}(z)/z-2c\tau(\V\U)-\tau((\V\U)^2)-\tau(\V\U)/z+\psi_{\V^{\frac{1}{2}}\,\U\,\V^{\frac{1}{2}}}(z)/z^2
\end{align}
and 
\begin{align}  
&\nonumber \tau(\V^2\Psi_{\V^{\frac{1}{2}}\,\U\,\V^{\frac{1}{2}}}(z))=\tau\big(\V^2\tau(\Psi_{\V^{\frac{1}{2}}\,\U\,\V^{\frac{1}{2}}}(z)|\V)\big)=\tau\big(\V^2\Psi_{\V}F(z)\big)=-\tau(\V^2)-\frac{\tau(\V)}{F(z)}+\psi_\V\big(F(z)\big)/F^2(z).
\end{align}
Taking into account that $\psi_{\V^{\frac{1}{2}}\,\U\,\V^{\frac{1}{2}}}(z)=\psi_{\V\boxtimes\U}(z)=\psi_{\V}(F(z))$ and substituting  $z=F^{-1}(z)$  we obtain two equations  (after simple calculation)  
\begin{align} \nonumber
\psi_{\V^{\frac{1}{2}}\,\U\,\V^{\frac{1}{2}}}(z)^2 z \left(d-c^2\right)-\psi_{\V^{\frac{1}{2}}\,\U\,\V^{\frac{1}{2}}}(z) ( c \tau(V) z
   +\tau(\V)-\tau(\V^2) z-\tau(\V\U)+\tau((\V\U)^2) z)\\\nonumber +\tau(\V\U) z ( \tau(V)-\tau(\V\U))=0,\end{align} 
and
\begin{align}  \nonumber \psi_{\V}(z)^2 \left(d z-c^2 z\right)+\psi_{\V}(z) (-c z \tau(\V\U)+z \tau(\V^2)-z \tau((\V\U)^2) +3
   \tau(V)+\tau(\V\U))\\ \nonumber+z\left( \tau(V)^2+ \tau(\V) \tau(\V\U)\right)=0.\end{align} 
Under the assumptions of Theorem \ref{twr:4}, we have $\tau(\V)-\tau(\U\V)=c$ and $\tau(\V^2)-\tau((\V\U)^2)=d+2c\tau(\U\V)$. Using the relationship between the parameters i.e. $\lambda=\sigma+\theta$, $\sigma=\frac{c\big(\tau(\V\U)+2c-2\tau(\V)\big)}{c^2-d},\theta=\frac{c^2}{d-c^2 }$, $\alpha=\frac{d-c^2}{c}$ we can simplify the above equations
$$\alpha z\psi^2_{\V^{\frac{1}{2}}\,\U\,\V^{\frac{1}{2}}}(z)
 -\psi_{\V^{\frac{1}{2}}\,\U\,\V^{\frac{1}{2}}}(z) (1+ (c -\alpha (1+\lambda))z)-(c-\alpha\lambda)z=0,$$
and
$$\psi_{\V}(z)^2\alpha z +\psi_{\V}(z)(\alpha z +\alpha\lambda z-1)+\alpha\lambda z =0.$$
Since  $\psi^{-1}_{\V^{\frac{1}{2}}\,\U\,\V^{\frac{1}{2}}}(z)=\chi_{\V^{\frac{1}{2}}\,\U\,\V^{\frac{1}{2}}}(z)$ and $\psi^{-1}_{\V}(z)=\chi_{\V}(z)$, from the above equations we get

$$\alpha z^2\chi_{\V^{\frac{1}{2}}\,\U\,\V^{\frac{1}{2}}}(z)
 -z (1+ (c -\alpha (1+\lambda))\chi_{\V^{\frac{1}{2}}\,\U\,\V^{\frac{1}{2}}}(z))-(c-\alpha\lambda)\chi_{\V^{\frac{1}{2}}\,\U\,\V^{\frac{1}{2}}}(z)=0,$$
and
$$z^2\alpha \chi_{\V}(z) +z(\alpha  \chi_{\V}(z) +\alpha\lambda  \chi_{\V}(z)-1)+\alpha\lambda  \chi_{\V}(z) =0.$$
Now we use \eqref{Str} to find the corresponding $S$-transform as
$$S_{\V^{\frac{1}{2}}\,\U\,\V^{\frac{1}{2}}}(z)=\frac{1}{\alpha\lambda-c +\alpha z},$$
and 
$$
S_{\V}(z)=\frac{1}{\alpha\lambda+\alpha z}.
$$
Note that the equations above define $S$-transform of the free-Poisson distribution (see equation \eqref{eq:StransformataPoison}).
Since $\U$ and $\V$ are free by \eqref{Scon} we arrive at
$$S_{\U}(z)=1+\frac{c}{\alpha\lambda+\alpha z-c}.$$

From \eqref{eq:StransormataBainomial} it follows that  $S_{\U}(z)$ is the Cauchy transform of free-binomial distribution with parameters $\theta=c/\alpha$ and $\sigma=\lambda-c/\alpha$.
\end{proof}

\subsection{Proof of Theorem \ref{tw:binomal}}
\begin{proof}
Similarly as the previous proof this proof is mainly based on Theorem \ref{twr:2}. \\
First note that if we define $\Psi_\X(z)=z\X(1-z\X)^{-1}$, then
\begin{align}
\label{eq:psi_tr2}
\frac{z}{z-1}\left(\Psi_\X(z)-\Psi_\X(1)\right)&=\frac{z}{z-1}
\left(\I-z\X\right)^{-1}\left(z\X(\I-\X)-(\I-z\X)\X\right)\left(\I-\X\right)^{-1}=
\left(\I-\X\right)^{-1}z\X(1-z\X)^{-1}\\
&=\left(\I-\X\right)^{-1}\Psi_\X(z).\nonumber
\end{align}

Using \eqref{eq:binomal2} we can write
\begin{align*}
\tau\left((\I-\Y)^{-1}\Psi_{\Y^{\frac{1}{2}}\X\Y^{\frac{1}{2}}}(z)\right)&=
\tau\left(\tau\left((\I-\Y)^{-1}|\Y^{\frac{1}{2}}\X\Y^{\frac{1}{2}}\right)\Psi_{\Y^{\frac{1}{2}}\X\Y^{\frac{1}{2}}}(z)\right)\\&=
d\tau\left(\left(\I-\Y^{\frac{1}{2}}\X\Y^{\frac{1}{2}}\right)^{-1}\Psi_{\Y^{\frac{1}{2}}\X\Y^{\frac{1}{2}}}(z)\right).
\end{align*}
By equation \eqref{eq:psi_tr2} we get
\begin{align}
\label{eq:binProof1}
\tau\left((\I-\Y)^{-1}\Psi_{\Y^{\frac{1}{2}}\X\Y^{\frac{1}{2}}}(z)\right)=
d\tau\left(\frac{z}{z-1}\left(\Psi_{\Y^{\frac{1}{2}}\X\Y^{\frac{1}{2}}}(z)-\Psi_{\Y^{\frac{1}{2}}\X\Y^{\frac{1}{2}}}(1)\right)\right).
\end{align}
On the other hand we can use Theorem \ref{twr:2} to transform \eqref{eq:binomal2}, which after applying \eqref{eq:psi_tr2} gives
\begin{align}
\label{eq:binProof2}
\tau\left((\I-\Y)^{-1}\Psi_{\Y^{\frac{1}{2}}\X\Y^{\frac{1}{2}}}(z)\right)=
\tau\left((\I-\Y)^{-1}\Psi_{\Y}\left(F(z)\right)\right)=
\frac{F(z)}{F(z)-1}\tau\left(\Psi_{\Y}\left(F(z)\right)-\Psi_{\Y}\left(1\right)\right),
\end{align}
where $\Psi_{\Y}\left(F(z)\right)=\Psi_{\Y^{\frac{1}{2}}\X\Y^{\frac{1}{2}}}(z)$.\\
Finally we can combine \eqref{eq:binProof1} and \eqref{eq:binProof2} which results
\begin{align} 
\label{eq:binProof3}
\frac{F(z)}{F(z)-1}\left(\psi _{\Y}\left(F(z)\right)-\psi_{\Y}\left(1\right)\right)=
d\frac{z}{z-1}\left(\psi_{\Y^{\frac{1}{2}}\X\Y^{\frac{1}{2}}}(z)-
\psi_{\Y^{\frac{1}{2}}\X\Y^{\frac{1}{2}}}(1)\right),
\end{align}
where for any non-commutative random variable $\X$ we denote $\psi_{\X}(z)=\tau\left(\Psi_{\X}(z)\right)$.
\\ Note that equation \eqref{eq:binomal2} implies
\begin{align*}
\tau\left(\left(\I-\Y\right)^{-1}\right)=d\tau\left(\left(\I-\Y^{\frac{1}{2}}\X\Y^{\frac{1}{2}}\right)^{-1}\right),
\end{align*}
the above equation can be rewritten as
\begin{align*}
1+\tau\left(\Y\left(\I-\Y\right)^{-1}\right)=d\left(1+\tau\left(\Y^{\frac{1}{2}}\X\Y^{\frac{1}{2}}\left(\I-\Y^{\frac{1}{2}}\X\Y^{\frac{1}{2}}\right)^{-1}\right)\right),
\end{align*}
which means
\begin{align*}
\psi_\Y(1)=d\left(1+\psi_{\Y^{\frac{1}{2}}\X\Y^{\frac{1}{2}}}(1)\right)-1.
\end{align*}
Putting the above equation into \eqref{eq:binProof3} gives us
\begin{align}
\label{eq:Bin_fin1}
\frac{F(z)}{F(z)-1}\left(\psi _{\Y}\left(F(z)\right)-d\left(1+\psi_{\Y^{\frac{1}{2}}\X\Y^{\frac{1}{2}}}(1)\right)+1\right)=
d\frac{z}{z-1}\left(\psi_{\Y^{\frac{1}{2}}\X\Y^{\frac{1}{2}}}(z)-
\psi_{\Y^{\frac{1}{2}}\X\Y^{\frac{1}{2}}}(1)\right).
\end{align}
Now we proceed similarly with equation \eqref{eq:binomal2}.
\begin{align*}
\tau\left((\I-\Y)\Psi_{\Y^{\frac{1}{2}}\X\Y^{\frac{1}{2}}}(z)\right)=
\tau\left(\tau\left((\I-\Y)|\Y^{\frac{1}{2}}\X\Y^{\frac{1}{2}}\right)\Psi_{\Y^{\frac{1}{2}}\X\Y^{\frac{1}{2}}}(z)\right)=
c\tau\left(\left(\I-\Y^{\frac{1}{2}}\X\Y^{\frac{1}{2}}\right)\Psi_{\Y^{\frac{1}{2}}\X\Y^{\frac{1}{2}}}(z)\right).
\end{align*}
By \eqref{eq:psi_prod} we have
\begin{align}
\label{eq:binProof4}
c\tau\left(\Psi_{\Y^{\frac{1}{2}}\X\Y^{\frac{1}{2}}}(z)-\frac{\Psi_{\Y^{\frac{1}{2}}\X\Y^{\frac{1}{2}}}(z)}{z}\right)
+c\tau\left(\Y^{\frac{1}{2}}\X\Y^{\frac{1}{2}}\right)=
c\tau\left(\frac{z-1}{z}\Psi_{\Y^{\frac{1}{2}}\X\Y^{\frac{1}{2}}}(z)\right)+c\tau\left(\Y^{\frac{1}{2}}\X\Y^{\frac{1}{2}}\right).
\end{align}
By Theorem \ref{twr:2}, by a similar computation we obtain 
\begin{align}
\label{eq:binProof5}
\tau\left((\I-\Y)\Psi_{\Y^{\frac{1}{2}}\X\Y^{\frac{1}{2}}}(z)\right)=
\tau\left((\I-\Y)\Psi_{\Y}(F(z))\right)=&
\tau\left(\frac{F(z)-1}{F(z)}\Psi_{\Y}(F(z))\right)+\tau\left(\Y\right)
\\ \nonumber=&\tau\left(\frac{F(z)-1}{F(z)}\Psi_{\Y^{\frac{1}{2}}\X\Y^{\frac{1}{2}}}(z)\right)+\tau\left(\Y\right).
\end{align}
Combining equations \eqref{eq:binProof4} and \eqref{eq:binProof5} we get
\begin{align*}
\tau\left(\frac{F(z)-1}{F(z)}\Psi_{\Y^{\frac{1}{2}}\X\Y^{\frac{1}{2}}}(z)\right)+\tau\left(\Y\right)=
c\tau\left(\frac{z-1}{z}\Psi_{\Y^{\frac{1}{2}}\X\Y^{\frac{1}{2}}}(z)\right)+\tau\left(\Y^{\frac{1}{2}}\X\Y^{\frac{1}{2}}\right).
\end{align*}
Taking into account that equation \eqref{eq:binomal1} implies $\tau\left(\I-\Y\right)=c\left(\I-\tau\left(\Y^{\frac{1}{2}}\X\Y^{\frac{1}{2}}\right)\right)$ we obtain
\begin{align}
\label{eq:Bin_fin2}
\frac{F(z)-1}{F(z)}\psi_{\Y^{\frac{1}{2}}\X\Y^{\frac{1}{2}}}(z)=c\frac{z-1}{z}\psi_{\Y^{\frac{1}{2}}\X\Y^{\frac{1}{2}}}(z)+c-1,
\end{align}
where we use the same notation as in the equation \eqref{eq:Bin_fin1}.

We can multiply equations \eqref{eq:Bin_fin1} and \eqref{eq:Bin_fin2} which gives us
\begin{align}
&\psi_{\Y^{\frac{1}{2}}\X\Y^{\frac{1}{2}}}(z)\left(\psi_{\Y^{\frac{1}{2}}\X\Y^{\frac{1}{2}}}(z)
-d\left(1+\psi_{\Y^{\frac{1}{2}}\X\Y^{\frac{1}{2}}}(1)\right)+1\right)=\\
&d\frac{z}{z-1}\left(\psi_{\Y^{\frac{1}{2}}\X\Y^{\frac{1}{2}}}(z)-
\psi_{\Y^{\frac{1}{2}}\X\Y^{\frac{1}{2}}}(1)\right)
\left(c\frac{z-1}{z}\psi_{\Y^{\frac{1}{2}}\X\Y^{\frac{1}{2}}}(z)+c-1\right),
\end{align}
we use here the relation $\psi_{\Y}\left(F(z)\right)=\psi_{\Y^{\frac{1}{2}}\X\Y^{\frac{1}{2}}}(z)$.\\
Now we define $\chi_{\Y^{\frac{1}{2}}\X\Y^{\frac{1}{2}}}=\psi_{\Y^{\frac{1}{2}}\X\Y^{\frac{1}{2}}}^{-1}$, then for $\chi_{\Y^{\frac{1}{2}}\X\Y^{\frac{1}{2}}}$ we obtain
\begin{align}
z\left(z
-d\left(1+\alpha\right)+1\right)=
d\frac{\chi_{\Y^{\frac{1}{2}}\X\Y^{\frac{1}{2}}}(z)}{\chi_{\Y^{\frac{1}{2}}\X\Y^{\frac{1}{2}}}(z)-1}\left(z-
\alpha\right)
\left(c\frac{\chi_{\Y^{\frac{1}{2}}\X\Y^{\frac{1}{2}}}(z)-1}{\chi_{\Y^{\frac{1}{2}}\X\Y^{\frac{1}{2}}}(z)}z+c-1\right),
\end{align}
where we denote $\alpha=\psi_{\Y^{\frac{1}{2}}\X\Y^{\frac{1}{2}}}(1)$.\\
From the above equation we get
\begin{align*}
\chi_{\Y^{\frac{1}{2}}\X\Y^{\frac{1}{2}}}(z)=\frac{z}{z+1}\left(1+\frac{1-d}{(c-1)d\alpha+z(1-cd)}\right).
\end{align*}
By equation \eqref{Str} and traciality of $\tau$ we get the $S$-transform of $\X\Y$
\begin{align}
\label{eq:SolutionXY}
S_{\X\Y}=1+\frac{1-d}{(c-1)d\alpha+z(1-cd)}.
\end{align}\\

We will obtain similarly the $S$-transform of $\Y$.\\
Now we rewrite \eqref{eq:Bin_fin1} and \eqref{eq:Bin_fin2} as
\begin{align*}
\frac{F(z)}{F(z)-1}\left(\psi _{\Y}\left(F(z)\right)-d\left(1+\alpha\right)+1\right)&=
d\frac{z}{z-1}\left(\psi _{\Y}\left(F(z)\right)-
\alpha\right)\\
\frac{F(z)-1}{F(z)}\psi _{\Y}\left(F(z)\right)-c+1&=c\frac{z-1}{z}\psi _{\Y}\left(F(z)\right).
\end{align*}
Multiplying both sides of the above equations gives us
\begin{align*}
\frac{F(z)}{F(z)-1}\left(\psi _{\Y}\left(F(z)\right)-d\left(1+\alpha\right)+1\right)
\left(\frac{F(z)-1}{F(z)}\psi _{\Y}\left(F(z)\right)-c+1\right)=\\
cd\left(\psi _{\Y}\left(F(z)\right)-
\alpha\right)\psi _{\Y}\left(F(z)\right).
\end{align*}
Substituting $z=F^{-1}(z)$ we obtain
\begin{align*}
\frac{z}{z-1}\left(\psi _{\Y}\left(z\right)-d\left(1+\alpha\right)+1\right)
\left(\frac{z-1}{z}\psi _{\Y}\left(z\right)-c+1\right)=\\
cd\left(\psi _{\Y}\left(z\right)-
\alpha\right)\psi _{\Y}\left(z\right),
\end{align*}
which for $\chi_\Y=\psi_\Y^{-1}$ implies
\begin{align*}
\frac{\chi_\Y(z)}{\chi_\Y(z)-1}\left(z-d\left(1+\alpha\right)+1\right)
\left(\frac{\chi_\Y(z)-1}{\chi_\Y(z)}z-c+1\right)=\\
cd\left(z-
\alpha\right)z.
\end{align*}
From the above equation we get
\begin{align*}
\chi_\Y(z)=\frac{z}{z+1}\left(1+\frac{c(1-d)}{(c-1)(d(1+\alpha)-1)+z(1-cd)}\right).
\end{align*}
By the equation \eqref{Str} the $S$-transform of $\Y$ has form
\begin{align}
S_\Y(z)=1+\frac{c(1-d)}{(c-1)(d(1+\alpha)-1)+z(1-cd)}.
\end{align}
From \eqref{eq:StransormataBainomial} we see that $\Y$ has a free binomial distribution with parameters $\sigma=\frac{(1-c) (d(\alpha+1)-1)}{cd-1}$ and $\theta=\frac{c(1-d)}{1-cd}$.

Since we know the $S$-transforms of $\X\Y$ and $\Y$ we can find the $S$-transform of $\X$ by the equation \eqref{Scon} which gives us
\begin{align*}
S_\X(z)=1+\frac{(c-1)(d-1)}{(c-1)d\alpha+z(1-cd)}.
\end{align*}
From \eqref{eq:StransormataBainomial} we see that $\X$ has a free binomial distribution with parameters $\sigma=\frac{(1-c)d\alpha}{cd-1}$ and $\theta=\frac{(c-1)(d-1)}{1-cd}$.
\end{proof}
\begin{center} Acknowledgments
\end{center} 
The authors would like to thank  M.\ Bo\.zejko  for several discussions and
helpful comments during the preparation of this paper. 
U.\ Franz also wants to thank Takahiro Hasebe for many stimulating
  discussions related to subordination and monotone independence. K.\ Szpojankowski thanks J.\ Weso\l{}owski for helpful discussions.

\end{document}